\numberwithin{equation}{section}
\newtheorem{theorem}{Theorem}[section]
\newtheorem{lemma}[theorem]{Lemma}
\newtheorem{corollary}[theorem]{Corollary}
\newtheorem{proposition}[theorem]{Proposition}
\newtheorem{claim}[theorem]{Claim}
\newtheorem{assumption}{Assumption}
\theoremstyle{definition}
\newtheorem{definition}[theorem]{Definition}
\newtheorem{example}[theorem]{Example}
\theoremstyle{remark}
\newtheorem{remark}[theorem]{Remark}
\begin{document}

\newcommand{\gn}[1]{{
\left\vert\kern-0.25ex
\left\vert\kern-0.25ex
\left\vert
#1 
\right\vert\kern-0.25ex
\right\vert\kern-0.25ex
\right\vert
}}

\title[Yosida Distance and Existence of Invariant Manifolds]{Yosida Distance and Existence of Invariant Manifolds in the Infinite-Dimensional Dynamical Systems}
\author[X.-Q. Bui]{Xuan-Quang Bui}
\address{Faculty of Fundamental Sciences, PHENIKAA University, Hanoi 12116, Vietnam}
\email[corresponding author]{quang.buixuan@phenikaa-uni.edu.vn}
 
\author[N.V. Minh]{Nguyen Van Minh}
\address{Department of Mathematics and Statistics, University of Arkansas at Little Rock, 2801 S University Ave, Little Rock, AR 72204, USA}
\email{mvnguyen1@ualr.edu}
\subjclass[2020]{34G10, 37D10, 34D20, 34C45, 34D09}
\keywords{Yosida distance, proto-derivative, exponential dichotomy, invariant manifolds}
\begin{abstract}
We introduce a new concept of \textit{Yosida distance} between two (unbounded) linear operators $A$ and $B$ in a Banach space $\mathbb{X}$ defined as $d_Y(A,B):=\limsup_{\mu\to +\infty} \| A_\mu-B_\mu\|$, where $A_\mu$ and $B_\mu$ are the Yosida approximations of $A$ and $B$, respectively, and then study the persistence of evolution equations under small Yosida perturbation. This new concept of distance is also used to define the continuity of the proto-derivative of the operator $F$ in the equation $u'(t)=Fu(t)$, where $F \colon D(F)\subset \mathbb{X} \rightarrow \mathbb{X}$ is a nonlinear operator. We show that the above-mentioned equation has local stable and unstable invariant manifolds near an exponentially dichotomous equilibrium if the proto-derivative of $F$ is continuous. The Yosida distance approach to perturbation theory allows us to free the requirement on the domains of the perturbation operators. Finally, the obtained results seem to be new.
\end{abstract}
\date{\today}
\maketitle
\section{Introduction}
It is well known in the qualitative theory of ordinary differential equations that the asymptotic behavior of linear equations of the form
\begin{equation}\label{ode}
u'(t)=Mu(t),\qquad u(t)\in \mathbb{R}^n,\quad t\in \mathbb{R},
\end{equation}
where $M$ is a $n\times n$-matrix, persists under ``small'' perturbation that is either linear or nonlinear if the system has an exponential dichotomy, that is, all eigenvalues of $M$ are off the imaginary axis (see e.g. \cite{cop, dalkre, engnag}), or equivalently, the unit circle does not intersect with the spectrum of $e^M$. For many decades, extensions of these classical results have been ones of the central topics in the theory of infinite dimensional dynamical systems with applications to partial differential equations and other types of evolution equations. Namely, for linear perturbation of a linear hyperbolic system in a Banach space
\begin{equation}\label{eq}
u'(t)=Au(t),\quad u(t)\in \mathbb{X},
\end{equation}
where $\mathbb{X}$ is a (complex) Banach space, $A \colon D(A)\subset \mathbb{X} \to \mathbb{X}$ is an unbounded linear operator, one often considers the equation
\begin{equation}\label{eql}
u'(t)=(A+B)u(t),\quad u(t)\in \mathbb{X},
\end{equation}
where $B \colon D(B)\subset \mathbb{X}\to\mathbb{X}$ is a linear operator. To justify for the ``smallness'' of the perturbation $B$ one often assumes that $B$ is bounded and its norm $\|B\|$ is small. This assumption on the ``smallness'' of perturbation $B$ actually limits its applicability of the obtained results to partial differential equations and other types of evolution equations. For this reason, when $A$ has some further properties like the generator of an analytic semigroup one can consider the perturbation $B$ among a more general class of linear operator that are $A$-bounded, that is, $\|Bx\| \le a\|Ax\| +c\|x\|$, $x\in D(A)$, for some fixed positive constants $a$ and $c$. Then, the smallness of $B$ is measured by the sizes of $\max \{a,c\}$. Of course, in these cass a requirement that $D(A) \subset D(A+B)$ is indispensable. These approaches are discussed in \cite{cholei, dalkre, engnag, paz, pru} for the generation of semigroups by $A+B$ that can be easily used to show that the exponential dichotomy of Eq. \eqref{eql} persists under small perturbation.

For nonlinear perturbation of Eq. \eqref{eq}
\begin{equation}\label{eqn}
u'(t)=Au+Fu,\quad u(t)\in \mathbb{X},
\end{equation}
where $F:D(F)\subset \mathbb{X}\to\mathbb{X}$ is a nonlinear operator, the asymptotic behavior of Eq. \eqref{eqn} near the equilibrium is often described by the (local) invariant manifolds near the equilibrium, see e.g. \cite{batluzen, batluzen2, cholu, pralun, heipru, lun, lun95, minwu, tem}. Apart from the assumption that $F$ be differentiable in some sense, say, in the sense of Fr\'{e}chet, the continuity of the derivatives is often determined by that of $F'(x)$ in certain spaces of bounded linear operators (see e.g. \cite{pralun, heipru, lun, lun95}). This allows to include more classes of partial differential equations into the consideration.

In this paper we will take an attempt to propose a new approach to the perturbation theory for Eq. \eqref{eq} in which the size of perturbation will be measured by the so-called ``Yosida distance''. This is a new concept of a pseudo metric defined on the set of closed operators, especially, the set of all generators of $C_0$-semigroups (see Definition~\ref{def 1} below). This allows us to measure the distance between two unbounded operators in many important classes as seen in our Lemma~\ref{lem 1} and Example \ref{exa 1}. Note that in our Examples \ref{exa 1} and ~\ref{exa age} Yosida distances between two unbounded operators could be determined while the domain of one operator may not contain that of the other. This strict requirement is often seen in previous works on perturbation theory of evolution equations and $C_0$-semigroups (see, for instance, \cite{cholei, dunsch, lun, paz}). We will use the Yosida distance to define the continuity of the proto-derivatives of $F$ in Eq. \eqref{eqn}. For linear equations \eqref{eql} we will show the persistence of the exponential dichotomy under small perturbation measured by Yosida distance between $A$ and $A+B$, see Theorem~\ref{the per}. The Yosida distance approach will be extended to nonlinear perturbation, namely, to study Eq. \eqref{eqn} where we allow $F$ to be proto-differentiable and its proto-derivative is continuous in the topology defined by the Yosida distance. We prove the existence of local stable and unstable invariant manifolds near an equilibrium of Eq. \eqref{eqn} if the linearized equation \eqref{eql} has an exponential dichotomy, see Theorems \ref{TM-Thm-USlocal} and \ref{TM-Thm-SMlocal}. We note that in our Assumptions \ref{M-assump124} and \ref{M-assump123} conditions on the m-accretiveness of the operators only to guarantee that they generate semiflows, or in other words, the corresponding equations are well-posed. At the end of the paper we provide some examples to show that our approach allows a generalization of known results as shown in Examples \ref{exa 5.1}, \ref{exa 5.2} and \ref{exa age}. To the best of our knowledge, the Yosida distance approach to the perturbation of exponential dichotomy and the obtained results discussed in this paper are new.

This paper is organized as follows:
In Section~\ref{Sect-Pr}, we first list some notations used in the paper. Then, we recall some background materials on accretive operators and generation of strongly continuous nonlinear semigroups.
Section~\ref{Sect-MainResults} contains the main results with the concept of Yosida distance between two linear operators and the roughness of exponential dichotomy in linear dynamical systems.
In Subsection~\ref{Subsect-InvMa}, using the concept of Yosida distance, we define the continuity of the proto-derivative of a nonlinear operator and prove the existence of invariant manifolds.
Finally, in Section~\ref{Sect-AE}, we give some examples to show that the Yosida distance is a consistent tool in studying the existence of invariant manifolds.

\section{Preliminaries}\label{Sect-Pr}
\subsection{Notations}
In this paper we will denote by $\mathbb{X},\, \mathbb{Y}$ Banach spaces with corresponding norms. The symbols $\mathbb{R}$ and $\mathbb{C}$ stand for the fields of real and complex numbers, respectively. Denote by $\mathcal{L}(\mathbb{X},\mathbb{Y})$ the Banach space of all bounded linear operators from a Banach space $\mathbb{X}$ to a Banach space $\mathbb{Y}$. We will denote the domain of an operator $T$ by $D(T)$ and its range by $R(T)$. The resolvent set of a linear operator $T$ in a Banach space will be denoted by $\rho(T)$ while its spectrum is denoted by $\sigma (T)$.
For $\lambda \in \rho (T)$, we denote the inverse $(\lambda I - T)^{-1}$ by $R(\lambda, T)$ and call it the resolvent (at $\lambda$) of linear operator $T$.
Let $P$ be a linear operator in a Banach space $\mathbb{X}$, as usual, $\mathrm{Ker}(P)$ and $\mathrm{Im}(P)$ are the notations of kernel and image of $P$.
We shall denote by $B_r (0, \mathbb{Y})$ the ball of radius $r$ centered at $0$ of $\mathbb{Y}$ and write $B_r (0, \mathbb{X}) = B_r (0)$ if this does not cause any confusion, when $\mathbb{X}$ is a given fixed Banach space.
Given function $F$, the notations $F'(u)$, $dF(u)$, and $\partial F(u)$ will stand for the 
Fr\'{e}chet derivative,
G\^{a}teaux derivative, and
proto-derivative, respectively, of $F$ at $u$.

\subsection{Accretive Operators and Generation of Strongly Continuous Nonlinear Semigroups}
\begin{definition}[accretive, m-accretive (see \cite{bar, cra})]\label{def accretive}
Let $B$ be a (possibly nonlinear multi-valued) operator in $\mathbb{X}$, then $B$ is called \textit{accretive} if $(I+\lambda B)^{-1}$ exists as a single-valued function and 
$$
\left \| (I+\lambda B)^{-1}x-(I+\lambda B)^{-1}y\| \le  \| x-y\right \|,
$$ 
for all $x,\,y \in D((I+\lambda B)^{-1})$.
An accretive operator $B$ is called \textit{m-accretive} if $R(I+\lambda B)=\mathbb{X}$ for all (equivalently for some) $\lambda >0$.
\end{definition}

Below is the well known Crandall-Liggett Theorem on the generation of strongly continuous nonlinear semigroups.
\begin{theorem}[see Crandall-Liggett \cite{cralig}]\label{the cra}
Let $A$ be a (possibly multivalued) nonlinear operator and $\omega$ be a real number such that $\omega I-A$ is accretive. If $R(I-\lambda A) \supset \overline {D(A)}$ for all sufficiently small positive $\lambda$, then
\begin{equation}\label{M-CLthm}
\lim_{n\to\infty} \left( I-\frac{t}{n}A\right)^{-1} x
\end{equation}
exists for all $x\in \overline{D(A)}$ and $t>0$. Moreover, if $S(t)$x  is defined as the limit in \eqref{M-CLthm}, then 
\begin{align}
& S(t+\tau) = S(t)S(\tau),\quad t,\, \tau \ge 0,
\\
& \lim_{t\downarrow 0} S(t)x = x, \quad x\in \overline{D(A)},
\\
& \| S(t)\| \le e^{\omega t}, \quad t\ge 0.
\end{align}
\end{theorem}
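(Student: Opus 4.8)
The plan is to construct the limit in \eqref{M-CLthm} through the resolvent operators $J_\lambda := (I-\lambda A)^{-1}$ and to reduce the entire statement to a single quantitative estimate for their iterates. First I would record the properties of $J_\lambda$ forced by the hypotheses. The range condition $R(I-\lambda A)\supset\overline{D(A)}$ shows that $J_\lambda$ is defined on $\overline{D(A)}$ for all small $\lambda>0$, while the accretivity of $\omega I-A$ makes it single-valued and Lipschitz: writing $A=\omega I-B$ with $B$ accretive gives $J_\lambda=(1-\lambda\omega)^{-1}\bigl(I+\tfrac{\lambda}{1-\lambda\omega}B\bigr)^{-1}$, and the last factor is a contraction by Definition~\ref{def accretive}, so that $\|J_\lambda x-J_\lambda y\|\le(1-\lambda\omega)^{-1}\|x-y\|$. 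Iterating, $(J_{t/n})^n$ is Lipschitz with constant $(1-\tfrac{t}{n}\omega)^{-n}$, a quantity that converges to $e^{\omega t}$ as $n\to\infty$; this will eventually yield the bound $\|S(t)\|\le e^{\omega t}$ once the limit is known to exist.

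Next I would prove the resolvent identity $J_\lambda x=J_\mu\bigl(\tfrac{\mu}{\lambda}x+(1-\tfrac{\mu}{\lambda})J_\lambda x\bigr)$ for $0<\mu\le\lambda$, which is immediate from the definition since $\tfrac{1}{\lambda}(J_\lambda x-x)\in A(J_\lambda x)$. The technical core, and the step I expect to be the main obstacle, is a uniform a priori estimate comparing iterates of different step sizes on the dense set $D(A)$: for $x\in D(A)$ one seeks a Crandall--Liggett inequality of the form
\[
\|(J_\lambda)^n x-(J_\mu)^m x\|\le C(\omega,t)\,|Ax|\,\sqrt{(n\lambda-m\mu)^2+n\lambda^2+m\mu^2},
\]
where $|Ax|:=\inf\{\|\xi\|:\xi\in Ax\}$. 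I would derive it by setting $a_{i,j}:=\|(J_\lambda)^i x-(J_\mu)^j x\|$, combining the resolvent identity with nonexpansiveness to obtain a recursion of the shape $a_{i,j}\le\tfrac{\mu}{\lambda}\,a_{i-1,j}+(1-\tfrac{\mu}{\lambda})\,a_{i,j-1}+(\text{controlled error})$, and then dominating the resulting double sequence by an explicit comparison sequence. Carrying the factors $(1-\lambda\omega)^{-1}$ through this induction without losing the square-root gain is the delicate bookkeeping that makes the argument nontrivial.

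Specializing to $\lambda=t/n$ and $\mu=t/m$ makes the term $(n\lambda-m\mu)^2$ vanish, so the estimate collapses to a bound of order $t\,|Ax|\,\sqrt{1/n+1/m}$, which shows that $\{(J_{t/n})^n x\}_n$ is Cauchy, hence convergent, for every $x\in D(A)$. The uniform Lipschitz bounds from the first step then let me extend the limit from $D(A)$ to all of $\overline{D(A)}$ by a routine density ($3\varepsilon$) argument; defining $S(t)x$ as the limit in \eqref{M-CLthm} produces at once the nonexpansiveness-type estimate $\|S(t)x-S(t)y\|\le e^{\omega t}\|x-y\|$, i.e. the third displayed conclusion.

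Finally I would verify the semigroup structure. Strong continuity at the origin follows from the one-step bound $\|(J_{t/n})^n x-x\|\le t\,|Ax|$ valid on $D(A)$ (a telescoping sum of the nonexpansive increments $\|J_{t/n}x-x\|\le\tfrac{t}{n}|Ax|$), combined with the density extension and letting $t\downarrow 0$. The algebraic law $S(t+\tau)=S(t)S(\tau)$ is obtained by comparing $(J_{(t+\tau)/N})^N$ with $(J_{t/N})^N(J_{\tau/N})^N$ and passing to the limit, once more using the uniform estimate to control the discrepancy. Assembling the three pieces --- the growth bound recorded in the first step, the semigroup law, and continuity at $0$ --- completes the proof.
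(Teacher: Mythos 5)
The paper does not prove this theorem at all --- it is quoted, with attribution, from Crandall--Liggett \cite{cralig} --- and your outline reconstructs precisely the argument of that cited source: nonexpansiveness of $J_\lambda=(I-\lambda A)^{-1}$ with constant $(1-\lambda\omega)^{-1}$, the resolvent identity, the two-parameter estimate on $\|J_\lambda^n x-J_\mu^m x\|$ whose $(n\lambda-m\mu)^2$ term vanishes on the diagonal $\lambda=t/n$, $\mu=t/m$, extension from $D(A)$ to $\overline{D(A)}$ by density, and finally the semigroup law and strong continuity; so your route and the paper's (proof-by-citation) coincide. The one detail to correct is the recursion: the resolvent identity gives $a_{i,j}\le(1-\mu\omega)^{-1}\left[\tfrac{\mu}{\lambda}\,a_{i-1,j-1}+\left(1-\tfrac{\mu}{\lambda}\right)a_{i,j-1}\right]$, i.e. it couples the diagonal neighbor $a_{i-1,j-1}$ rather than $a_{i-1,j}$, and it is this diagonal recursion, solved with binomial weights and boundary estimates $a_{i,0},a_{0,j}\lesssim(i\lambda+j\mu)\,|Ax|$, that produces the square-root gain you need.
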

In addition, if $A$ is a single-valued operator and $R(I-\lambda A)\supset \mathrm{clco} \ D(A)$ (``$\mathrm{clco}$'' means the closure of convex hull of $D(A)$), then $S(t)x$ is the solution of the Cauchy problem
\begin{equation}
\frac{du}{dt}=Au,\quad u(0)=x \in D(A) .
\end{equation}

\subsection{Exponential Dichotomy}
\begin{definition}[Exponential dichotomy]\label{TM-Definition1.2}
A linear semigroup $(T(t))_{t \ge 0}$ is said to have an \textit{exponential dichotomy} or to be \textit{hyperbolic} if there exist a bounded projection $P$ on $\mathbb{X}$ and positive constants $N$ and $\alpha$ satisfying
\begin{enumerate}
\item $T(t) P = P T(t)$, for $t \ge 0$;
\item $T(t)\big|_{\mathrm{Ker}(P)}$  is an isomorphism from $\mathrm{Ker}(P)$ onto $\mathrm{Ker}(P)$, for all $t \ge 0$, 
and its inverse on $\mathrm{Ker}(P)$ is defined by $T(-t):=\left (T(t)\big|_{\mathrm{Ker}(P)}\right )^{-1}$;
\item the following estimates hold 
\begin{align}
&
\|T(t)x\| \le N e^{-\beta t} \|x\|,
\quad \mbox{ for all }
t \ge 0,\quad
x \in \mathrm{Im}(P),
\\
&
\|T(-t) x\| \le N e^{-\beta t} \|x\|,
\quad \mbox{ for all }
t \ge 0,\quad
x \in \mathrm{Ker}(P).
\end{align}
\end{enumerate}
\end{definition}

The projection $P$ is called the \textit{dichotomy projection} for the hyperbolic semigroup $(T(t))_{t \ge 0}$, and the constants $N$ and $\alpha$ are called \textit{dichotomy constants}.

The following result is well known:
\begin{lemma}\label{Lemhyp}
Let $(T(t))_{t \ge 0}$ be a $C_0$-semigroup. Then, it has an exponential dichotomy if and only if $\sigma (T(1)) \cap \{ z\in \mathbb{C}: |z|=1\}=\emptyset$.
\end{lemma}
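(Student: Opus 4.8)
The plan is to reduce the statement to the spectral theory of the single bounded operator $T(1)$, exploiting that exponential dichotomy of the semigroup is equivalent to hyperbolicity (spectrum off the unit circle) of its time-one map. I would treat the two implications separately, using in both directions the Riesz decomposition induced by a spectral projection together with the spectral radius formula $r(S)=\lim_{n\to\infty}\|S^n\|^{1/n}$.

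For the necessity, assume $(T(t))_{t\ge 0}$ has an exponential dichotomy with projection $P$ and constants $N,\beta$. Since $P$ commutes with every $T(t)$, it commutes with $T(1)$, so $\mathbb{X}=\mathrm{Im}(P)\oplus\mathrm{Ker}(P)$ reduces $T(1)$ and $\sigma(T(1))=\sigma(T(1)|_{\mathrm{Im}(P)})\cup\sigma(T(1)|_{\mathrm{Ker}(P)})$. From $\|T(n)x\|\le Ne^{-\beta n}\|x\|$ on $\mathrm{Im}(P)$ and the spectral radius formula, $r(T(1)|_{\mathrm{Im}(P)})\le e^{-\beta}<1$, so this part of the spectrum lies strictly inside the unit disk. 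On $\mathrm{Ker}(P)$ the operator $T(1)$ is invertible with inverse $T(-1)$, and $\|T(-n)x\|\le Ne^{-\beta n}\|x\|$ forces $r((T(1)|_{\mathrm{Ker}(P)})^{-1})\le e^{-\beta}<1$; hence $\sigma(T(1)|_{\mathrm{Ker}(P)})$ lies in $\{|z|\ge e^{\beta}\}$, strictly outside the unit disk. Therefore $\sigma(T(1))\cap\{|z|=1\}=\emptyset$.

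For the sufficiency, assume $\sigma(T(1))\cap\{|z|=1\}=\emptyset$. The compact set $\sigma(T(1))$ splits into the two disjoint compact pieces $\sigma_{\mathrm{s}}=\sigma(T(1))\cap\{|z|<1\}$ and $\sigma_{\mathrm{u}}=\sigma(T(1))\cap\{|z|>1\}$. I would define the dichotomy projection by the Dunford integral
\[
P=\frac{1}{2\pi i}\oint_{\Gamma} R(\lambda,T(1))\,d\lambda,
\]
where $\Gamma$ is a contour inside the unit circle enclosing $\sigma_{\mathrm{s}}$. Then $P$ is a bounded projection, and because $T(t)T(1)=T(1)T(t)$ for every $t\ge 0$, $T(t)$ commutes with the resolvent $R(\lambda,T(1))$, hence with $P$. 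On $\mathrm{Im}(P)$ one has $\sigma(T(1)|_{\mathrm{Im}(P)})=\sigma_{\mathrm{s}}$, so $r(T(1)|_{\mathrm{Im}(P)})<1$ and the forward decay follows from the spectral radius formula. On $\mathrm{Ker}(P)$, $\sigma(T(1)|_{\mathrm{Ker}(P)})=\sigma_{\mathrm{u}}\subset\{|z|>1\}$, so $T(1)|_{\mathrm{Ker}(P)}$ is invertible and $r((T(1)|_{\mathrm{Ker}(P)})^{-1})<1$, giving the backward decay.

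The main obstacle, and the part requiring genuine care, is converting these discrete (integer-time) spectral estimates into the continuous-time estimates of Definition~\ref{TM-Definition1.2} and verifying that $T(t)|_{\mathrm{Ker}(P)}$ is an isomorphism for \emph{every} $t\ge 0$, not merely for integer $t$. For the estimates I would write $t=n+s$ with $n=\lfloor t\rfloor$ and $s\in[0,1)$, absorbing the uniformly bounded factor $\sup_{0\le s\le 1}\|T(s)\|$ (finite by strong continuity and the uniform boundedness principle) into the constant $N$. For invertibility, I would use that on $\mathrm{Ker}(P)$ the commuting identities $T(1)=T(t)T(1-t)=T(1-t)T(t)$ hold for $t\in[0,1]$; since $T(1)$ is invertible there, $T(t)$ is simultaneously injective and surjective, hence an isomorphism, and one sets $T(-t):=(T(t)|_{\mathrm{Ker}(P)})^{-1}$, extending to all $t\ge 0$ by the group law. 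This closes the equivalence.
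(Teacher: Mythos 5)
Your proof is correct, but it cannot be compared to the paper's own argument in the usual sense: the paper does not prove this lemma at all, it simply cites Engel--Nagel \cite[1.17 Theorem]{engnag}. Your argument is essentially the standard proof of that cited result --- Riesz spectral projection $P=\frac{1}{2\pi i}\oint_\Gamma R(\lambda,T(1))\,d\lambda$ for the time-one map, commutation of $P$ with the semigroup, the spectral radius formula on each spectral subspace, and interpolation from integer to real times --- so what you have done is fill in the content that the paper delegates to the reference. One detail deserves to be made explicit: for the \emph{backward} estimate, writing $t=n+s$ is not enough to ``absorb $\sup_{0\le s\le 1}\|T(s)\|$ into $N$''; you need a uniform bound on the inverses $\bigl(T(s)\big|_{\mathrm{Ker}(P)}\bigr)^{-1}$ for $s\in[0,1)$, not on $\|T(s)\|$ itself. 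This is supplied by the same commuting identity you used for invertibility: on $\mathrm{Ker}(P)$ one has $T(n+s)T(1-s)=T(n+1)$, hence
\begin{equation*}
T(-t)=\bigl(T(n+s)\big|_{\mathrm{Ker}(P)}\bigr)^{-1}
=T(1-s)\big|_{\mathrm{Ker}(P)}\,\bigl(T(n+1)\big|_{\mathrm{Ker}(P)}\bigr)^{-1},
\end{equation*}
so the backward estimate also reduces to integer times with the factor $\sup_{0\le s\le 1}\|T(s)\|$ in front. With that line added, your proof is complete and self-contained, which is arguably a small improvement over the paper's bare citation.
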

\begin{proof}
See Engel-Nagel \cite[1.17 Theorem]{engnag}.
\end{proof}

From Lemma~\ref{Lemhyp}, it is easy to prove the following result:
\begin{lemma}
Let $(T(t))_{t \ge 0}$ be a $C_0$-semigroup that has an exponential dichotomy. Then, $(S(t))_{t \ge 0}$ has an exponential dichotomy provided that $S(1)$ is sufficiently close to $T(1)$.
\end{lemma}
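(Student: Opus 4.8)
The plan is to reduce the assertion to the spectral characterization of exponential dichotomy furnished by Lemma~\ref{Lemhyp}, and then to invoke the standard stability of the resolvent set of a bounded operator under perturbations small in operator norm. Throughout, $(S(t))_{t\ge 0}$ is understood to be a $C_0$-semigroup, as required for Lemma~\ref{Lemhyp} to apply to it.

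First I would record what the hypothesis gives. Since $(T(t))_{t\ge 0}$ has an exponential dichotomy, Lemma~\ref{Lemhyp} yields $\sigma(T(1)) \cap \Gamma = \emptyset$, where $\Gamma := \{z\in\mathbb{C} : |z|=1\}$ denotes the unit circle; equivalently $\Gamma \subset \rho(T(1))$. Because $\Gamma$ is compact and the map $\lambda \mapsto R(\lambda, T(1))$ is continuous (indeed analytic) on the open set $\rho(T(1))$, the quantity $M := \sup_{\lambda\in\Gamma} \|R(\lambda,T(1))\|$ is finite.

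Next I would show that $\|S(1)-T(1)\| < 1/M$ forces $\Gamma \subset \rho(S(1))$. For each $\lambda\in\Gamma$ one factors
\[
\lambda I - S(1) = \bigl(\lambda I - T(1)\bigr)\bigl[I - R(\lambda,T(1))\bigl(S(1)-T(1)\bigr)\bigr].
\]
Here $\|R(\lambda,T(1))(S(1)-T(1))\| \le M\,\|S(1)-T(1)\| < 1$, so the bracketed factor is invertible by the Neumann series, while $\lambda I - T(1)$ is invertible since $\lambda\in\rho(T(1))$. Hence $\lambda I - S(1)$ is invertible, i.e. $\lambda\in\rho(S(1))$. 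As $\lambda\in\Gamma$ was arbitrary, $\sigma(S(1))\cap\Gamma=\emptyset$. A second application of Lemma~\ref{Lemhyp}, now to the $C_0$-semigroup $(S(t))_{t\ge 0}$, then shows that $(S(t))_{t\ge 0}$ has an exponential dichotomy.

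The argument is essentially routine, and there is no serious obstacle. The only point that merits attention—and the reason the pointwise Neumann-series estimate upgrades to a single smallness threshold—is the passage from the resolvent bound at each $\lambda$ to the \emph{uniform} bound $M$ over the entire circle, which rests on the compactness of $\Gamma$ together with the continuity of $\lambda\mapsto R(\lambda,T(1))$. This is exactly the sense in which ``$S(1)$ sufficiently close to $T(1)$'' should be read, namely $\|S(1)-T(1)\| < 1/M$.
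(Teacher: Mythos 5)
Your proof is correct and follows exactly the route the paper intends: the paper offers no written proof beyond the remark that the lemma follows from Lemma~\ref{Lemhyp}, and your argument—spectral characterization of hyperbolicity via $\sigma(T(1))$, a uniform resolvent bound on the unit circle by compactness, and a Neumann-series perturbation to conclude $\sigma(S(1))$ also misses the circle—is precisely that standard deduction, carried out carefully.
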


\section{Main Results}\label{Sect-MainResults}

\subsection{Yosida Distance}\label{Subsect-YD}
We begin this section with the concept of Yosida distance between two linear operators. To this end, we recall the concept of Yosida approximation of a linear operator in the definition below (see e.g. \cite{paz,yos}).

Given operator $A$ in a Banach space $\mathbb{X}$ with $\rho(A) \supset [\omega, \infty)$, where $\omega$ is a given number, the \textit{Yosida approximation} $A_\lambda$ is defined as $ A_\lambda := \lambda^2 R(\lambda, A)-\lambda I$
for sufficiently large $\lambda$.

\begin{definition}[Yosida distance]\label{def 1}
The \textit{Yosida distance} between two linear operators $A$ and $B$ satisfying $\rho(A) \supset [\omega, \infty)$ and $ \rho(B) \supset [\omega, \infty)$, where $\omega$ is a given number, is defined to be
\begin{equation}
d_{Y}(A,B):=\limsup_{\mu\to +\infty} \left \| A_\mu-B_\mu\right \| .
\end{equation}
\end{definition}

\begin{lemma}\label{lem 1}
The following assertions are valid:
\begin{enumerate}[\rm (i)]
\item\label{lem1-1} Let $A,\,B$ be the generators of contraction semigroups. Assume further that $D(A)= D(B)$. Then, $A=B$, provided that 
$d_{Y}(A,B)=0$.
\item\label{lem1-2} Let $A,\, B\in \mathcal{L}(\mathbb{X})$. Then
\begin{equation}
d_{Y}(A,B)=\| A-B\|.
\end{equation}
\item\label{lem1-3} If $A$ is the generator of a $C_0$-semigroup $T(t)$ such that $\| T(t)\| \le Me^{\omega t}$, and $C$ is a bounded operator, then $d_{Y}(A,A+C)$ is finite. Moreover,
\begin{equation}
d_{Y}(A,A+C)\le M^2\|C\|
\end{equation}
\item\label{lem1-4} Let $A$ be the generator of an analytic semigroup $(T(t))_{t\ge 0}$ such that $\|T(t)\| \le Me^{\omega t}$, and $C$ be an $A$-bounded operator, that is, $D(C) \supset D(A)$, and there are positive constants $a,\,c$ such that
\begin{equation}\label{A-bounded}
\| Cx \| \le a\| Ax\| + c\| x\| .
\end{equation}
Then, there exists a constant $\delta >0$ such that if $0\le a\le \delta$, the distance $d_{Y}(A,A+C)$ is finite. Moreover,
\begin{equation}\label{ana 4}
d_{Y}(A,A+C) \le aKM+cM^2,
\end{equation}
where $K$ and $M$ are positive constants that depend only on $A$.
\end{enumerate}
\end{lemma}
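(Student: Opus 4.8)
The plan is to reduce all four assertions to two elementary facts: the pointwise convergence $A_\mu x \to Ax$ as $\mu \to +\infty$ for every $x \in D(A)$, and the identity
\begin{equation}
A_\mu - B_\mu = \mu^2\bigl(R(\mu,A) - R(\mu,B)\bigr),
\end{equation}
which is immediate from $A_\mu = \mu^2 R(\mu,A) - \mu I$ since the $\mu I$ terms cancel. Throughout I would use the Hille--Yosida bound $\|R(\mu,A)\| \le M/(\mu-\omega)$ for $\mu>\omega$ whenever $\|T(t)\| \le Me^{\omega t}$, together with the second resolvent identity $R(\mu,A) - R(\mu,A+C) = -R(\mu,A+C)\,C\,R(\mu,A)$ to rewrite the difference of Yosida approximations.

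For \eqref{lem1-1}, on the common domain $D(A)=D(B)$ one has $A_\mu x \to Ax$ and $B_\mu x \to Bx$; since $d_{Y}(A,B)=0$ forces $\|A_\mu-B_\mu\|\to 0$, passing to the limit in $\|A_\mu x - B_\mu x\|\le \|A_\mu-B_\mu\|\,\|x\|$ gives $Ax=Bx$ for all such $x$, hence $A=B$. For \eqref{lem1-2}, when $A$ is bounded and $\mu>\|A\|$ the Neumann series gives $\mu R(\mu,A)=(I-A/\mu)^{-1}=\sum_{n\ge 0}(A/\mu)^n$, so $A_\mu = A + A^2/\mu + A^3/\mu^2 + \cdots \to A$ in operator norm, and likewise $B_\mu\to B$; therefore $\|A_\mu-B_\mu\|\to\|A-B\|$ and the $\limsup$ equals $\|A-B\|$.

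For \eqref{lem1-3}, the bounded perturbation theorem shows $A+C$ generates a $C_0$-semigroup with $\|S(t)\|\le Me^{(\omega+M\|C\|)t}$, so $\|R(\mu,A+C)\|\le M/(\mu-\omega-M\|C\|)$ with the \emph{same} constant $M$. Combining the two displayed identities gives $A_\mu-(A+C)_\mu=-\mu^2 R(\mu,A+C)\,C\,R(\mu,A)$, whence
\begin{equation}
\|A_\mu-(A+C)_\mu\|\le \frac{\mu^2}{(\mu-\omega-M\|C\|)(\mu-\omega)}\,M^2\|C\|.
\end{equation}
Letting $\mu\to+\infty$ the prefactor tends to $1$, yielding $d_{Y}(A,A+C)\le M^2\|C\|$.

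For \eqref{lem1-4}, I would again start from $A_\mu-(A+C)_\mu=-\mu^2R(\mu,A+C)\,C\,R(\mu,A)$ and use the $A$-bound to write $\|C R(\mu,A)\|\le a\|AR(\mu,A)\|+c\|R(\mu,A)\|$, with $AR(\mu,A)=\mu R(\mu,A)-I$. The term proportional to $c$ is handled exactly as in \eqref{lem1-3} and contributes at most $cM^2$ in the limit. The main obstacle is the term proportional to $a$: here $\|AR(\mu,A)\|=\|\mu R(\mu,A)-I\|$ does not tend to $0$, so the crude $C_0$-estimate only yields $\mu^2\|R(\mu,A+C)\|\,a\|AR(\mu,A)\|$ of order $\mu$, which is insufficient. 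Defeating this growth is precisely where the analyticity of the semigroup must be exploited --- through sharp sectorial resolvent bounds for both $A$ and $A+C$ --- and it is also what forces the smallness restriction $0\le a\le\delta$, with $\delta$ chosen so that $A+C$ remains sectorial with resolvent constant governed by $A$ alone. I expect verifying that these analytic estimates genuinely control the $\mu^2$ factor, and that the resulting limit is bounded by $aKM+cM^2$ with $K$ depending only on $A$, to be the delicate heart of the argument; the first three parts are routine once the two displayed identities are in hand.
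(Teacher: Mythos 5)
Your arguments for parts (i), (ii), and (iii) are correct and essentially coincide with the paper's: (i) is the same pointwise-convergence argument via $A_\mu x\to Ax$ on $D(A)=D(B)$, and (iii) uses the same resolvent identity together with the same Hille--Yosida and bounded-perturbation bounds. For (ii), your direct Neumann expansion $A_\mu=A+A^2/\mu+A^3/\mu^2+\cdots$, which gives $A_\mu\to A$ in operator norm, is a mild streamlining of the paper's route through the identity $R(\mu,A)-R(\mu,B)=BC_\mu-C_\mu A$ with $C_\mu=R(\mu,B)R(\mu,A)$, but it rests on the same Neumann-series fact.

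Part (iv) is where your proposal has a genuine gap: you do not prove it, you only isolate the obstruction (under the available bounds the $a$-term contributes $\mu^2\|R(\mu,A+C)\|\cdot a\|AR(\mu,A)\|\sim a\mu$) and defer to unspecified ``sharp sectorial resolvent bounds.'' You should know that this gap cannot be filled, and that the paper itself closes it only by invoking a false inequality: it cites Pazy for the bound $\|AR(\mu,A)\|\le K/\mu$ for large real $\mu$ when $A$ generates an analytic semigroup. Since $AR(\mu,A)=\mu R(\mu,A)-I$, that bound would make $\mu R(\mu,A)$ invertible in $\mathcal{L}(\mathbb{X})$ for $\mu>K$, hence surjective, forcing $D(A)=\mathrm{Im}\bigl(R(\mu,A)\bigr)=\mathbb{X}$ and therefore $A\in\mathcal{L}(\mathbb{X})$ by the closed graph theorem. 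For a genuinely unbounded sectorial operator the best available estimate is $\|AR(\mu,A)\|\le K$ uniformly in $\mu$, which is exactly the bound you observed to be insufficient.

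In fact, assertion (iv) is false as stated, so your instinct about where the difficulty lies was correct in the strongest sense. Take $\mathbb{X}=L^2(0,\infty)$, let $A$ be multiplication by $-s$ (the generator of an analytic contraction semigroup, so $M=1$, $\omega=0$), and let $C=\varepsilon A$, which is $A$-bounded with $a=\varepsilon$ arbitrarily small and any $c>0$, while $A+C=(1+\varepsilon)A$ again generates an analytic contraction semigroup. Both Yosida approximations are multiplication operators, and $A_\mu-(A+C)_\mu$ is multiplication by
\begin{equation*}
\frac{\varepsilon\mu^{2}s}{(\mu+s)\bigl(\mu+(1+\varepsilon)s\bigr)},
\end{equation*}
so that, evaluating at $s=\mu$,
\begin{equation*}
\left\|A_\mu-(A+C)_\mu\right\|\ \ge\ \frac{\varepsilon\mu}{2(2+\varepsilon)}\ \longrightarrow\ \infty
\quad\text{as } \mu\to\infty .
\end{equation*}
Hence $d_Y(A,A+C)=+\infty$ for every $\varepsilon>0$, contradicting the claimed bound $d_Y(A,A+C)\le aKM+cM^2$. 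The honest conclusion of your analysis is therefore not that a delicate sectorial argument is needed, but that no such argument exists: only the bounded part of the perturbation (the $c$-term, i.e. part (iii)) is compatible with finite Yosida distance in general, and part (iv) would need to be dropped or substantially restated.
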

\begin{proof}
\
\begin{enumerate}[\rm (i)]
\item
From the semigroup theory 
(see Pazy~\cite[Lemma 1.3.3]{paz}),
if $A$ is the generator of a contraction $C_0$-semigroup, then
\begin{equation}
\lim_{\mu\to+\infty} A_\mu x=Ax,
\quad
x\in D(A).
\end{equation}
Therefore, for $x\in D(A)=D(B)$, $Ax=Bx$. 

\item Firstly, we have
\begin{align*}
R(\mu, A) - R(\mu, B)
=
(\mu - B)
R(\mu, B)
R(\mu, A)
-
R(\mu,B)
R(\mu,  A)
(\mu - A).
\end{align*}
Set
$
C_{\mu}
:=
R(\mu, B)
R(\mu, A) 
$.
Then
\begin{align*}
R(\mu, A) - R(\mu, B)
&= 
(\mu - B)
C_{\mu}
-
C_{\mu}
(\mu - A)
\\
&= 
\mu C_{\mu}
+
B C_{\mu}
-
C_{\mu} \mu
-
C_{\mu} A
\\
&= 
B C_{\mu} - C_{\mu} A.
\end{align*}
Thus,
$$
\mu^2
\|R(\mu, A) - R(\mu, B)\|
=
\mu^2
\left \|
B C_{\mu} - C_{\mu} A
\right \|.
$$
We will show that for a bounded linear operator $A$ in $\mathbb{X}$ the following is valid
\begin{equation}\label{7}
\lim_{\mu\to+\infty} R(\mu,A)=0.
\end{equation}
In fact, for sufficiently large $\mu$, say $\mu >\| A\|$, using the Neuman series, for large $\mu$, we have
\begin{align*}
\| R(\mu,A)\| 
= \frac{1}{\mu} \left\| R\left(1,\frac{1}{\mu}A\right) \right \|
&\le
\frac{1}{\mu} \left\| \sum_{n=0}^\infty \left( \frac{1}{\mu}A\right)^n \right\|  
\\
&\le
\frac{1}{\mu}
\frac{1}{1-  \frac{1}{\mu}\|A\|}.
\end{align*}
This proves \eqref{7}.
Next, by \eqref{7} and the identity
$(\mu-A)R(\mu,A)=I$
we have that
\begin{equation*}
\lim_{\mu\to+\infty} \mu R(\mu,A)=I,
\end{equation*}
so
\begin{equation*}
\lim_{\mu\to+\infty} \mu^2 C_\mu=I,
\end{equation*}
Finally, we have
$$
d_{Y}(A,B)
:=
\limsup_{\mu\to +\infty} 
\mu^2
\left \|
B C_{\mu} - C_{\mu} A\right \|
=
\|B-A\|,
$$
and this finishes the proof.

\item By a simple computation we have
\begin{equation}
\label{4}
R(\mu, A+C)-R(\mu, A) = R(\mu, A+C)CR(\mu,A). 
\end{equation}
It is known (see e.g. Pazy~\cite[Theorem 1.1, p.~76]{paz} that $A+C$ with $D(A+C)=D(A)$ generates a $C_0$-semigroup $(S(t))_{t\ge 0}$ satisfying
\begin{equation}\label{per}
\| S(t)\| \le Me^{(\omega +M\|C\|)t}, 
\end{equation}
so by the Hille-Yosida Theorem
$$
\| R(\mu, A)\| \le \frac{M}{\mu -\omega},\quad 
\| R(\mu,  A+C)\| \le \frac{M}{\mu -(\omega+M\| C\|)}, 
$$
for certain positive constants $M$ and $\omega$. Therefore,
\begin{align}
\limsup_{\mu \to \infty}  \mu^2 \| R(\mu, A)-R(\mu, A+C)\| 
& \le
\limsup_{\mu\to\infty } \frac{\mu^2M^2\| C\| }{(\mu -\omega)(\mu -(\omega +M\| C\|)} 
\nonumber
\\
&= M^2\| C\| <\infty. 
\label{bounded}
\end{align}

\item By Pazy~\cite[Theorem 2.1, p.~80]{paz} and the remark that follows it, there exists a positive constant $\delta$ such that if $0\le a \le \delta$, then,
$A+C$ generates an analytic semigroup $(S(t))_{t\ge 0}$ that satisfies
$$
\| S(t)\| \le M e^{(\omega +\Lambda (c))t},
$$
where $\lim_{c\to 0} \Lambda (c) =0$. Therefore, by \eqref{4}, as $A$ generates an analytic semigroup there are positive constants $K$ and $N$ (see Pazy~\cite[Theorem~5.5, p.~65]{paz}) such that if $\mu >N$, then
$$
\| AR(\mu,A)\| \le \frac{K}{\mu}.
$$
Hence,
\begin{align*}
d_{Y}(A, A+C)
&= 
\limsup_{\mu\to \infty} \mu^2 \| R(\mu,A+C)-R(\mu, A)\| 
\\
& \le
\limsup_{\mu\to\infty} 
\frac{\mu^2M}{\mu-\omega -\Lambda (c)} \left( a\| AR(\mu, A)\| +c\| R(\mu, A)\| \right)
\\
&\le
\limsup_{\mu\to\infty} \frac{\mu^2M}{\mu-\omega -\Lambda (c)} \left(  \frac{aK}{\mu} +\frac{cM}{\mu-\omega}\right)
\\
&\le aKM+cM^2.
\end{align*}
\end{enumerate}
The proof is completed.
\end{proof}
\begin{remark}
The above lemma has shown that the Yosida distance can measure the perturbation of a closed operator $A$ by an operator $C$ such that the domain of $A+C$ contains that of $A$. 
Below we will present an example to show that our concept of Yosida distance can actually be applied to larger classes of perturbation in which the requirement that $D(A) \subset D(A+C)$ is freed.
\end{remark}

\begin{example}\label{exa 1}
Consider equations of the form
\begin{equation}\label{Exa1}
\frac{dx(t)}{dt} = a x(t-1), \quad x(t)\in \mathbb{R},
\end{equation}
where $a\in \mathbb{R}$ is a given number. As shown in Hale~\cite{hal} this functional differential equation generates a $C_0$-semigroup $(T(t))_{t\ge 0}$  in the phase space $\mathbb{X}:= C([-1,0],\mathbb{R})$  with the generator $A_a$ defined as
$$
[A_a\phi ](t)=\begin{cases}\phi'(t) & \mbox{ if }  [-1,0),\\
a\phi (-1) & \mbox{ if } t=0,
\end{cases}
$$
with 
$$
D(A_a):= \left \{ \phi \in C^1([-1,0],\mathbb{R}) \mid \phi'(0) = a \phi (-1) \right \}.
$$
We are going to compute $R(\lambda, A_a):=(\lambda-A_a)^{-1}$ for sufficiently large $\lambda$. Given a function $\psi \in \mathbb{X}$, based on the definition of $A_a$ and by a simple computation we arrive at
\begin{equation}
\left( (\lambda I-A_a)^{-1} \psi \right)(t) = \left[ \frac{1}{\lambda -ae^{-\lambda}} \left( \lambda \int^0_{-1} e^{-\lambda s}\psi (s)ds +\psi (0)\right)  \right] e^{\lambda t} -\int^t_{-1} e^{\lambda (t-s)}\psi (s)ds.
\end{equation}
Therefore, by defining the operators $A_a$ and $A_b$ corresponding to two numbers $a$ and $b$ as above, for sufficiently large $\lambda>0$, we have the following
\begin{align}
& \lambda^2 \left | \left( R(\lambda , A_a)-R(\lambda , A_b) \psi\right) (t) \right |
\nonumber \\
&=\left| \left( \frac{\lambda^2}{\lambda -ae^{-\lambda}}-\frac{\lambda^2}{\lambda -be^{-\lambda}} \right) \left( \lambda \int^0_{-1} e^{-\lambda s}\psi (s)ds +\psi (0)\right) e^{\lambda t} \right| \nonumber \\
& = \frac{\lambda^2|a-b|}{(\lambda-ae^{-\lambda})(\lambda-be^{-\lambda})} \left| \lambda \int^0_{-1} e^{-\lambda s}\psi (s)ds +\psi (0)\right| \nonumber \nonumber \\
& \le \frac{\lambda^2|a-b|}{(\lambda-ae^{-\lambda})(\lambda-be^{-\lambda})} \left( (1-e^{-\lambda}) \sup_{-1\le s\le 0}|\psi (s)|+| \psi (0) |\right) \nonumber \nonumber \\
& \le \frac{2\lambda^2|a-b|}{(\lambda-ae^{-\lambda})(\lambda-be^{-\lambda})} \| \psi \| .
\end{align}
Finally, we arrive at
\begin{align}
d_Y(A_a,A_b) 
&= \lim_{\lambda \to +\infty } \sup_{\| \psi \| \le 1} \sup_{-1\le t\le 0} \lambda^2 \left | \left( R(\lambda , A_a)-R(\lambda , A_b) \psi\right) (t) \right | 
\nonumber \\
& \le 2|a-b|. 
\end{align}
\end{example}

\begin{claim}
Under the notations of Example \ref{exa 1}, for any reals $a$ and $b$, the following claims are true
\begin{enumerate}
\item The Yosida distance between $A_a$ and $A_b$ satisfies the estimate
\begin{align}
d_Y(A_a,A_b) 
& \le 2|a-b|. 
\end{align}
\item The inclusion 
$
D(A_a) \subset D(A_b)
$ 
is valid 
  if and only if $
a = b.$
\end{enumerate}
\end{claim}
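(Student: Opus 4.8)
The first assertion requires no new work: it is exactly the estimate derived in the computation of Example~\ref{exa 1}, so I would simply cite that display and move on. All the content of the claim is therefore concentrated in the second assertion, the characterization of when $D(A_a) \subset D(A_b)$.

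For that assertion, the \emph{if} direction is immediate: when $a = b$ the two boundary constraints defining the domains coincide, so $D(A_a) = D(A_b)$ and in particular the inclusion holds. The substantive direction is the converse, and the plan is to argue it by producing a single witness function. The point is that both $D(A_a)$ and $D(A_b)$ consist of the same class of $C^1$ functions, and differ \emph{only} through the boundary relation $\phi'(0) = a\phi(-1)$ versus $\phi'(0) = b\phi(-1)$. Consequently, if one can exhibit some $\phi$ belonging to both domains with $\phi(-1) \neq 0$, then subtracting the two boundary relations gives $(a-b)\phi(-1) = 0$, and hence $a = b$. So it suffices to find $\phi \in D(A_a)$ with $\phi(-1) \neq 0$ and invoke the assumed inclusion $D(A_a) \subset D(A_b)$ to place $\phi$ in $D(A_b)$ as well.

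The only step that requires a moment's thought is the construction of this witness, and the subtlety is that one cannot simply take an arbitrary function with $\phi(-1)\neq 0$: the defining boundary condition couples the value $\phi(-1)$ to the derivative $\phi'(0)$, so, for instance, a nonzero constant function fails to lie in $D(A_a)$ unless $a=0$. An affine function resolves this tension cleanly. I would take $\phi(t) := 1 + a(t+1)$ on $[-1,0]$. It is $C^1$, satisfies $\phi(-1) = 1 \neq 0$, and has $\phi'(0) = a = a\,\phi(-1)$, so indeed $\phi \in D(A_a)$. Under the hypothesis $D(A_a) \subset D(A_b)$ we then get $\phi \in D(A_b)$, which forces $\phi'(0) = b\,\phi(-1)$, that is, $a = b$. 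This closes the converse direction and completes the claim. I do not anticipate any genuine obstacle; the entire difficulty, such as it is, reduces to choosing a function that simultaneously respects the boundary condition for $A_a$ and keeps $\phi(-1)$ nonzero, which the affine choice achieves.
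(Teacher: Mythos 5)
Your proof is correct, and it follows the same strategy as the paper's own proof: part (1) is just a citation of the computation in Example~\ref{exa 1}, and part (2) is settled by exhibiting a single witness $\phi \in D(A_a)$ with $\phi(-1) \neq 0$, so that the assumed inclusion $D(A_a) \subset D(A_b)$ forces $a\phi(-1) = \phi'(0) = b\phi(-1)$ and hence $a = b$. The one place where you diverge is in the choice of witness, and here your version is in fact the more careful one. The paper takes $\phi(t) = t+2$, which satisfies $\phi'(0) = \phi(-1) = 1$; but the membership condition for $D(A_a)$ is $\phi'(0) = a\phi(-1)$, so this particular function lies in $D(A_a)$ only when $a = 1$, and the paper's argument as written does not literally produce an element of $D(A_a)$ for general $a$. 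Your affine witness $\phi(t) = 1 + a(t+1)$, with $\phi(-1) = 1$ and $\phi'(0) = a = a\phi(-1)$, belongs to $D(A_a)$ for every real $a$ and thus closes exactly the gap you flagged in your own commentary (that the boundary condition couples $\phi(-1)$ to $\phi'(0)$, so the witness must be tailored to $a$). You also record the trivial ``if'' direction, which the paper leaves implicit. In short: same approach, but your execution repairs a genuine (if easily fixable) defect in the paper's choice of test function.
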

\begin{proof} Part (1): It is clear from the above computation.\\
Part (2): By assumption, $\phi \in D(A_a)$ if and only if $\phi \in C^1([-1, 0], \mathbb{R})$ and
$$
\phi' (0) = a \phi (-1).
$$
This implies 
$$
\phi' (0) = a \phi (-1) = b \phi (-1).
$$
Take the function $\phi (t):= t+2$. Clearly, $\phi \in C^1([-1, 0], \mathbb{R})$ and $\phi'(0) = \phi (-1)=1$.
Hence $a = b$.
\end{proof}

\begin{remark}
This example shows that under even very small perturbation, the domain $D(A_a)$ of the generator $A_a$ of the $C_0$-semigroup associated with Eq. \eqref{Exa1} may change sharply, so the requirement that the domain $D(A_b)$ of the generator of  the $C_0$-semigroup associated with perturbed equation contain $D(A_a)$ does not make sense.
\end{remark}

The following theorem is the main result of this subsection on the roughness of exponential dichotomy under Yosida perturbation.
\begin{theorem}\label{the per}
Let $A$ be the generator of a $C_0$-semigroup that has an exponential dichotomy. Then, the $C_0$-semigroup generated by an operator $B$ also has an exponential dichotomy, provided that $d_{Y}(A,B)$ is sufficiently small.
\end{theorem}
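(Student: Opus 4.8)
The plan is to reduce the statement to the perturbation lemma stated immediately before the theorem, which asserts that a $C_0$-semigroup remains hyperbolic once its time-one map is perturbed by a small amount in operator norm. Writing $(T(t))_{t\ge 0}$ for the semigroup generated by $A$ and $(S(t))_{t\ge 0}$ for the one generated by $B$, it therefore suffices to show that the Yosida distance controls the norm gap between the time-one maps, i.e. that $\|T(1)-S(1)\|$ can be made small by making $d_{Y}(A,B)$ small. The bridge between these two quantities will be the Yosida approximations $A_\mu=\mu^2 R(\mu,A)-\mu I$ and $B_\mu=\mu^2 R(\mu,B)-\mu I$, which are bounded operators and hence generate the uniformly continuous semigroups $e^{tA_\mu}$ and $e^{tB_\mu}$.

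First I would record uniform bounds for these approximating semigroups on the interval $[0,1]$. Since $A$ and $B$ generate $C_0$-semigroups, the Hille--Yosida estimates give $\|R(\mu,A)^n\|\le M(\mu-\omega)^{-n}$ and an analogous bound for $B$; writing $A_\mu=-\mu I+\mu^2 R(\mu,A)$ and expanding the exponential then yields
\[
\|e^{tA_\mu}\|\le M\,e^{\mu\omega t/(\mu-\omega)},
\]
so that $\|e^{tA_\mu}\|\le C_1$ and $\|e^{tB_\mu}\|\le C_2$ for $t\in[0,1]$ and all large $\mu$, with $C_1,C_2$ independent of $\mu$. Next, because $A_\mu$ and $B_\mu$ are bounded, Duhamel's formula
\[
e^{tA_\mu}-e^{tB_\mu}=\int_0^t e^{(t-s)B_\mu}(A_\mu-B_\mu)e^{sA_\mu}\,ds
\]
is available, and evaluating at $t=1$ and taking norms gives $\|e^{A_\mu}-e^{B_\mu}\|\le C_1C_2\,\|A_\mu-B_\mu\|$.

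Finally I would pass to the limit $\mu\to\infty$, and here lies the main obstacle: the Hille--Yosida convergence $e^{tA_\mu}x\to T(t)x$ and $e^{tB_\mu}x\to S(t)x$ is only \emph{strong}, not in operator norm, so one cannot simply let $\mu\to\infty$ in the norm inequality above. The remedy is to test against a fixed vector: for each $x\in\mathbb{X}$,
\[
\|(T(1)-S(1))x\|=\lim_{\mu\to\infty}\|(e^{A_\mu}-e^{B_\mu})x\|\le\limsup_{\mu\to\infty}\|e^{A_\mu}-e^{B_\mu}\|\,\|x\|\le C_1C_2\,d_{Y}(A,B)\,\|x\|,
\]
where the last inequality uses the definition of $d_{Y}$ together with the $\mu$-uniformity of $C_1,C_2$. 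Taking the supremum over $\|x\|\le1$ yields $\|T(1)-S(1)\|\le C_1C_2\,d_{Y}(A,B)$. Consequently, choosing $d_{Y}(A,B)$ smaller than $\varepsilon/(C_1C_2)$, where $\varepsilon$ is the closeness threshold furnished by the perturbation lemma stated just before the theorem (whose proof rests on Lemma~\ref{Lemhyp} and the upper semicontinuity of the spectrum), forces $S(1)$ to remain hyperbolic, so that $(S(t))_{t\ge0}$ inherits the exponential dichotomy.
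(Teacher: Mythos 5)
Your proposal is correct and follows essentially the same route as the paper's own proof: a Duhamel (variation-of-constants) estimate for the Yosida approximations $e^{tA_\mu}-e^{tB_\mu}$, the $\mu$-uniform growth bound $\|e^{tA_\mu}\|\le Me^{2\omega t}$ from Pazy, passage to the limit via the strong convergence $e^{tA_\mu}x\to T(t)x$ tested on fixed vectors, and then the preceding lemma on hyperbolicity of semigroups whose time-one maps are norm-close. Your explicit handling of the strong-versus-norm convergence issue matches what the paper does implicitly, and skipping the paper's (ultimately unused) reduction to contraction semigroups is harmless.
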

\begin{proof}
First, we assume that both semigroups generated by $A$ and $B$ are contraction $C_0$-semigroups. Then, 
by Pazy~\cite[Lemma~3.4]{paz}, $e^{tA_\lambda }$ is a $C_0$-semigroup of contractions. 

Let $C$ and $D$ be two bounded linear operators in a Banach space $\mathbb{X}$. We will estimate the growth of $e^{tC}-e^{tD}$. By the Variation-of-Constants Formula that is applied to the equation $x'(t)=Cx(t)+(D-C)x(t)$ and by setting $x(t)=e^{tD}x$ we have
$$
x(t)=e^{tC}x+\int^t_0 e^{(t-s)C}(D-C)x(s)ds .
$$
For each $t\ge 0$, we have
\begin{align*}
\left \|e^{tC}x-e^{tD}x\right \|
&\le
\int^t_0 \left \| e^{(t-s)C}(D-C)e^{sD}x\right \| ds
\\
&\le
t\| C-D\|  \| e^{tC}\| \| e^{tD}\|  \| x\|.
\end{align*}
Therefore,
\begin{equation}
\| e^{tA_\lambda}-e^{tB_\lambda }\| \le  t \| A_\lambda-B_\lambda \| e^{tA_\lambda }\| \|e^{tB_\lambda }\|.
\end{equation}
Now we assume that $(T(t))_{t\ge 0}$ and $(S(t))_{t\ge 0}$ are the $C_0$-semigroups generated by $A$ and $B$ that satisfy 
$$
\| T(t)\| \le Me^{\omega t},
\quad
\| S(t)\| \le Me^{\omega t}
$$
for certain positive numbers $M$ and $\omega$. As is well known, for the Yosida approximation $A_\lambda$ of the generator $A$ of a $C_0$-semigroup $T(t)$ that satistifies $\|T(t)\| \le Me^{\omega t}$ the following estimate of the growth is valid (see e.g. Pazy~\cite[(5.25)]{paz})
$$
\| e^{tA_\lambda } \|
\le  Me^{2\omega t},
\quad
\| e^{tB_\lambda }\|
\le  Me^{2\omega t}.
$$
Hence, we have 
\begin{equation}\label{9}
\| e^{tA_\lambda}-e^{tB_\lambda }\|
\le 
t M^2\| A_\lambda-B_\lambda \| e^{4 t\omega }.
\end{equation}
By Pazy~\cite[Theorem 5.5]{paz}, for each $x\in \mathbb{X}$, we have
\begin{align}
\|T(t)x-S(t)x\| 
&=
\lim_{\lambda \to\infty} \| e^{tA_\lambda }x-e^{tB_\lambda }x\|  
\nonumber\\
& \le
tM^2e^{4\omega t} \limsup_{\lambda \to\infty } \| A_\lambda  - B_\lambda \|   
\nonumber\\
&=
tM^2e^{4\omega t} d_{Y}(A,B). \label{bounded2} 
\end{align}
Finally, if $d_{Y}(A,B)$ is sufficiently small, $\|T(1)-S(1)\|$ is sufficiently small as well, and thus, $(S(t))_{t\ge 0}$ has an exponential dichotomy.
\end{proof}
\begin{corollary}
Let $A$ be the generator of an exponentially dichotomous $C_0$-semigroup $(T(t))_{t\ge 0}$ in $\mathbb{X}$ and $C$ be a bounded linear operator in $\mathbb{X}$. Then, the operator $A+C$ with domain $D(A+C) =D(A)$ generates an exponentially dichotomous $C_0$-semigroup, provided that $\|C\|$ is sufficiently small.
\end{corollary}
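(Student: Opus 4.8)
The plan is to deduce this corollary directly from Theorem~\ref{the per}, using the quantitative estimate of Lemma~\ref{lem 1}(\ref{lem1-3}) to convert smallness of $\|C\|$ into smallness of the Yosida distance $d_Y(A,A+C)$. Since Theorem~\ref{the per} already guarantees persistence of the exponential dichotomy whenever the perturbed generator is close to $A$ in Yosida distance, the only things that need checking are that $A+C$ genuinely generates a $C_0$-semigroup and that its Yosida distance from $A$ shrinks with $\|C\|$.

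First I would record that, as the generator of an exponentially dichotomous $C_0$-semigroup $(T(t))_{t\ge 0}$, the operator $A$ in particular obeys a growth bound $\|T(t)\|\le M e^{\omega t}$ for some constants $M\ge 1$ and $\omega\in\mathbb{R}$. By the classical bounded perturbation theorem (the same result invoked in the proof of Lemma~\ref{lem 1}(\ref{lem1-3}), namely Pazy~\cite[Theorem~1.1, p.~76]{paz}), the operator $A+C$ with $D(A+C)=D(A)$ generates a $C_0$-semigroup $(S(t))_{t\ge 0}$. This disposes of the well-posedness half of the statement and, in particular, makes $A+C$ an admissible choice of the operator $B$ appearing in Theorem~\ref{the per}.

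Next I would apply Lemma~\ref{lem 1}(\ref{lem1-3}) with perturbing operator $C$ to obtain the bound
\[
d_Y(A,A+C)\le M^2\|C\|.
\]
Let $\delta>0$ be the threshold implicit in Theorem~\ref{the per}, so that the dichotomy of $A$ persists for every generator $B$ with $d_Y(A,B)<\delta$; this $\delta$ depends only on the growth and dichotomy constants of $A$ and is therefore fixed once $A$ is given. Choosing $\|C\|<\delta/M^2$ then forces $d_Y(A,A+C)<\delta$, and an application of Theorem~\ref{the per} with $B=A+C$ yields the exponential dichotomy of $(S(t))_{t\ge 0}$, completing the argument.

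I do not anticipate any genuine obstacle: the substantive content has already been packaged into Theorem~\ref{the per} and Lemma~\ref{lem 1}(\ref{lem1-3}), and the linear dependence $d_Y(A,A+C)\le M^2\|C\|$ transfers smallness immediately. The only point deserving a word of care is that the admissible size of $\|C\|$ is governed by the constant $M^2$, so the threshold on $\|C\|$ is not universal but is determined by the growth constant of the unperturbed semigroup generated by $A$.
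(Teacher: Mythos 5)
Your proposal is correct and follows essentially the same route as the paper: the paper's proof likewise invokes the estimate $d_Y(A,A+C)\le M^2\|C\|$ from Lemma~\ref{lem 1}(\ref{lem1-3}) (where generation of the perturbed semigroup via the bounded perturbation theorem is already built in) and then applies Theorem~\ref{the per}. Your additional remarks about the threshold $\delta$ and its dependence on the growth constant $M$ are accurate elaborations of what the paper leaves implicit.
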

\begin{proof}
Let $A$ generate a $C_0$-semigroup $(T(t))_{t\ge 0}$ that satisfies $\| T(t)\| \le Me^{\omega}$.  By \eqref{bounded} if $\|C\|$ is sufficiently small, then $d_{Y}(A,A+C)$ is sufficiently small as well, so by Theorem \ref{the per}, the semigroup generated by $A+C$ also has an exponential dichotomy.
\end{proof}

\begin{corollary}
Let $A$ be the generator of a hyperbolic analytic semigroup $(T(t))_{t\ge 0}$ in $\mathbb{X}$ satisfying $\| T(t)\| \le Me^{\omega t}$. Then, if $C$ is a $A$-bounded linear operator in $\mathbb{X}$, that is, it satisfies for all $x\in D(A)$
$$
\| Cx\| \le a\| Ax\| +c\| x\| 
$$
for certain positive constants $a$ and $c$. Then, $A+C$ generates an exponentially dichotomous analytic semigroup, provided that $a$ and $c$ are sufficiently small.
\end{corollary}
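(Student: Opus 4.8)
The plan is to run exactly the argument of the preceding corollary, but with the bounded-perturbation estimate replaced by its analytic-semigroup counterpart, i.e.\ part (iv) of Lemma~\ref{lem 1} in place of part (iii). First I would invoke that part of Lemma~\ref{lem 1}: since $A$ generates an analytic semigroup with $\|T(t)\|\le Me^{\omega t}$ and $C$ is $A$-bounded with constants $a,c$, there is a threshold $\delta>0$ such that whenever $0\le a\le\delta$ the operator $A+C$ (with $D(A+C)=D(A)$) itself generates an analytic semigroup, and moreover the Yosida distance obeys the bound \eqref{ana 4}, namely $d_{Y}(A,A+C)\le aKM+cM^2$ with $K,M$ depending only on $A$. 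This single step simultaneously secures analyticity of the perturbed semigroup and a fully quantitative control of the Yosida distance.

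Next I would use the smallness of $a$ and $c$ to push $d_{Y}(A,A+C)$ below whatever threshold Theorem~\ref{the per} requires for persistence of the dichotomy. Writing $\varepsilon>0$ for that threshold, it suffices to take $a\le\min\{\delta,\ \varepsilon/(2KM)\}$ and $c\le\varepsilon/(2M^2)$, which forces $aKM+cM^2\le\varepsilon$ and hence $d_{Y}(A,A+C)\le\varepsilon$. Because $A$ generates an exponentially dichotomous $C_0$-semigroup by hypothesis, Theorem~\ref{the per} then guarantees that the $C_0$-semigroup generated by $A+C$ also has an exponential dichotomy. Combining the two conclusions completes the proof: by the first step the semigroup is analytic, and by the second step it is exponentially dichotomous, so $A+C$ generates an exponentially dichotomous analytic semigroup.

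I expect the only genuinely delicate point to be the interplay of the two smallness requirements. Analyticity of $A+C$ is driven purely by the relative bound $a$ (through the perturbation theorem for analytic semigroups underlying Lemma~\ref{lem 1}), whereas persistence of the dichotomy is driven by the full Yosida distance, which involves both $a$ and $c$. One must therefore keep $a$ below $\delta$ to retain analyticity while simultaneously shrinking the combination $aKM+cM^2$; happily both are achieved by taking $a$ and $c$ small, so the two constraints cooperate rather than compete. A secondary item to confirm is that the constants $K,M,\delta$ furnished by Lemma~\ref{lem 1} depend only on $A$ and not on $C$, which is precisely what that lemma asserts, so the thresholds in the choice of $a,c$ above are legitimately independent of the perturbation.
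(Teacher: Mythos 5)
Your proposal is correct and follows essentially the same route as the paper: the paper likewise secures analyticity of the semigroup generated by $A+C$ via Pazy's perturbation theorem (the result underlying Lemma~\ref{lem 1}(iv)), controls $d_{Y}(A,A+C)$ by the estimate \eqref{ana 4}, and then invokes Theorem~\ref{the per} to transfer the exponential dichotomy. The only cosmetic difference is that the paper phrases the smallness condition through the graph-norm operator norm $\gn{C}$ rather than directly through the constants $a$ and $c$, which is equivalent since $\gn{C}\le\max\{a,c\}$.
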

\begin{proof}
By assumption for each $\ x\in D(A)$ we have
\begin{align}
\| Cx\| & \le \gn{C}\cdot  \gn{x} \notag \\
 & \le \gn{C}\cdot \| A x\| +  \gn{C}\cdot \gn{x}.
\end{align}
As is well known (see e.g. Pazy~\cite[Theorem 2.1, p.~80]{paz}) for sufficiently small $\gn{C}$, the operator $A+C$ generates an analytic semigroup. Moreover, by \eqref{ana 4}
if  $\gn{C}$ is sufficiently small, $d_{Y}(A,A+C)$ is sufficiently small, so by Theorem~\ref{the per}, the semigroup generated by $A+C$ has an exponential dichotomy.
\end{proof}

\subsection{Invariant Manifolds}\label{Subsect-InvMa}
We consider evolution equations of the form
\begin{equation}\label{nonlinear eq}
u'(t) =Au(t),
\end{equation}
where $A$ is a nonlinear single-valued operator from $D(A)\subset \mathbb{X}$ to $\mathbb{X}$. We will assume that $A(0)=0$, $A$ is proto-differentiable in a neighborhood of $0$ (see the definition below) and the linearized evolution equation at $0$ (i.e. $u'=\partial A(0) u$) has an exponential dichotomy. Roughly speaking, our next result in this section to show that if the proto-derivative of $A$ is continuous at $0$ in the Yosida distance' sense, then there exist stable and unstable invariant manifolds in a neighborhood of $0$.

\subsubsection{Proto-Differentiability}
The convergence of sets in a complete metric space $(X,d)$ in this section is adapted from the similar concept from Rockafellar \cite{roc}. A family of sets $\{S_t\} _{t>0}$ in a complete metric space $(X,d)$ is said to \textit{converge} to a set $S\subset X$ as $t\downarrow 0$, written
\begin{equation}
S=\lim_{t\downarrow 0} S_t, 
\end{equation}
if $S$ is closed and
\begin{equation}\label{18}
\lim_{t\downarrow 0} 
\mathrm{dist} (w,S_t)
=
\mathrm{dist} (w,S),
\quad \mbox{ for all } w\in X,
\end{equation}
where ``$\mathrm{dist}$'' denotes the distance  $\mathrm{dist} (w,S):= \inf_{y\in S} \{d(w, y) \}$. It is often convenient to view \eqref{18} as
the equation
\begin{equation}
S=\liminf_{t\downarrow 0} S_t=\limsup_{t\downarrow 0} S_t, 
\end{equation}
where
\begin{equation}
\liminf_{t\downarrow 0} S_t 
=
\left \{
w\in \mathbb{X}
:
\limsup_{t\downarrow 0} \mathrm{dist}(w,S_t)=0
\right \}, 
\end{equation}
and
\begin{equation}
\limsup_{t\downarrow 0} S_t 
=
\left \{
w\in\mathbb{X}
:
\liminf_{t\downarrow 0} \mathrm{dist} (w,S_t)=0
\right \} .
\end{equation}
As in this paper all operators and functions under consideration are assumed to be sing-valued we will adapt the definition of proto-differentiability from Rockafellar~\cite{roc} accordingly.

\begin{assumption}\label{AsptD(G)}
Let $G: D(G)\subset V\subset \mathbb{X} \to \mathbb{X}$ be an operator.
We assume that domain $D(G)$ is an open subset of a vector subspace
$V \subset \mathbb{X}$.
\end{assumption}

\begin{definition}[Proto-differentiability]\label{def-protodiff}
Under Assumption \ref{AsptD(G)} let $x\in D(G)$ be a given vector.
For each $t \in (0, 1)$, let 
$T_t
\colon
V\subset \mathbb{X} \rightarrow \mathbb{X}$
be an operator defined as
\begin{equation}
T_t(w)=\frac{G(x+tw)-G(x)}{t}
\end{equation}
for each  $w \in B_{\varepsilon}(x)\cap V$,
where $\varepsilon$ is a sufficient small positive constant such that $\bar B_\varepsilon (x)\cap V  \subset D(G)$.
Then, we say that $G$ is \textit{proto-differentiable} at $u$ if there is a linear operator $T \colon D(T) \subset V \rightarrow \mathbb{X}$  
such that in $\bar B_\varepsilon (x) \times \mathbb{X}$ the graph of $T_t$ converges  to the graph of $T$ as $t \downarrow 0$. In this case we write $\partial G(x)=T$.
\end{definition}

\begin{remark}\label{rem proto}
As all functions considered in this paper are assumed to be single-valued, the proto-differentiablity of a function $G$ mentioned in Definition~\ref{def-protodiff} can be stated equivalently as follows (see \cite{aubeke,kat}): $G$ is proto-differentiable at $x\in D(G)$ if and only if $\partial_iG(x)=\partial _sG(x)$, where $\partial_iG(x)$ and $\partial _sG(x)$ are defined as:
\begin{enumerate}
\item The linear operator $\partial_i G(x)$ is defined at all $u\in \mathbb{X}$ and its value at $u$ is $v=:\partial_iG(x)u$  if for each sequence $\{ t_n\} \downarrow 0$, there exists a sequence $\{(u_n,v_n)\}\subset \mathbb{X}\times \mathbb{X}$ such that $(u_n,v_n)\to (u,v)$ in $\mathbb{X}\times \mathbb{X}$, $x+t_nu_n\in D(G)$ and
\begin{equation}
\frac{G(x+t_nu_n)-G(x)}{t_n}=v_n.
\end{equation}
\item
The linear operator $\partial_s G(x)$ is defined at all $u\in \mathbb{X}$ and its value at $u$ is $v=:\partial_sG(x)u$ if there exists a sequence $\{ t_n\} \downarrow 0$, and a sequence $\{(u_n,v_n)\}\subset \mathbb{X}\times \mathbb{X}$ such that $(u_n,v_n)\to (u,v)$ in $\mathbb{X}\times \mathbb{X}$, $x+t_nu_n\in D(G)$ and
\begin{equation}
\frac{G(x+t_nu_n)-G(x)}{t_n}=v_n.
\end{equation}
\end{enumerate}
By definition it is apparent that $\partial_iG(x)\subset \partial_sG(x)$.
\end{remark}

Before we proceed to studying the nonlinear perturbation of exponential dichotomy we consider some special cases.
\begin{example}\label{Ex-Aclosed}
Consider $G(x) = Ux$, where $U \colon D(U)\subset \mathbb{X} \to \mathbb{X}$ is a closed linear operator. Then, $G$ is proto-differentiable at every $x\in D(U)$ and $\partial G(x) = U$. 
\end{example}
\begin{proof}
Indeed, we have
$$
D_t(w)
=
\frac{G(u+tw)-G(u)}{t}
=
\frac{U(u+tw)-Uu}{t}
=
\frac{U(tw)}{t}
=
Uw,
$$
so, the operator $D_t$ and $U$ are identical in any neighborhood of $u$, and their graphs must be the same.
\end{proof}

\begin{definition}[G\^{a}teaux differentiability]
Suppose $\mathbb{X}$ and $\mathbb{X}$ are Banach spaces, $U\subseteq X$ is open, and $F: U\subset \mathbb{X} \rightarrow \mathbb{Y}$. Operator $F$ is \textit{G\^{a}teaux differentiable} at $u\in U$ if there exists an operator $L\in \mathcal{L}(\mathbb{X},\mathbb{X})$ such that
$$ 
\lim_{\tau \rightarrow 0}
\frac{F(u+\tau w) - F(u)}{\tau} =Lw,
$$
for all $w\in \mathbb{X}$. In this case we denote the G\^{a}teaux derivative of $F$ at $u$ by $dF(u)$.
\end{definition}

\begin{proposition}\label{M-pro gateau}
Let $G \colon D(G) = V \subset \mathbb{X} \rightarrow \mathbb{X}$ and let $V$ be equipped with the graph norm $\gn{x}:=\| x\| +\| Sx\|$, where $S:D(S)=V\subset \mathbb{X} \to \mathbb{X}$ is a closed linear operator. Then, the following assertions are true:
\begin{enumerate}
\item[(i)] If $G$ is a G\^{a}teaux differentiable operator from $(V, \gn{\cdot })$  to $(\mathbb{X},\| \cdot \|)$ and $T$ is its derivative at $x$. Then \begin{equation}
T \subset \partial_i G(x).
\end{equation}
\item[(ii)] If $G$ is a Fr\'{e}chet differentiable operator from $(V, \gn{\cdot })$  to $(\mathbb{X},\| \cdot \|)$ and $T$ is its derivative at $x\in V$ that is a closed operator in $\mathbb{X}$. Then, $G$ is proto-differentiable at $x\in V$ and 
\begin{equation}
 \partial G(x)=T.
\end{equation}
 Moreover, if $H=S+G$, then $\partial H(x)=S+G'(x)$, for all $x\in \mathbb{X}$.
\end{enumerate}
\end{proposition}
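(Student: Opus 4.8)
The plan is to work entirely through the two set-limit operators $\partial_iG(x)$ and $\partial_sG(x)$ of Remark~\ref{rem proto}: by that remark proto-differentiability is exactly the identity $\partial_iG(x)=\partial_sG(x)$, and $\partial_iG(x)\subset\partial_sG(x)$ holds unconditionally. For (i), I would fix $u\in V$, put $v:=Tu$, and for an arbitrary sequence $t_n\downarrow0$ choose the constant approximating directions $u_n:=u$ and $v_n:=\frac{G(x+t_nu)-G(x)}{t_n}$. Since $V$ is a vector subspace containing $x$ and $u$, we have $x+t_nu_n\in V=D(G)$, and G\^{a}teaux differentiability from $(V,\gn{\cdot})$ to $(\mathbb{X},\|\cdot\|)$ gives $v_n\to Tu=v$ in $\|\cdot\|$; thus $(u_n,v_n)\to(u,v)$ in $\mathbb{X}\times\mathbb{X}$. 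As $t_n$ was arbitrary, this is precisely the defining condition $v=\partial_iG(x)u$, so $T\subset\partial_iG(x)$.

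For (ii), Fr\'{e}chet differentiability implies G\^{a}teaux differentiability with the same derivative, so part (i) already gives $T\subset\partial_iG(x)\subset\partial_sG(x)$; it remains to prove the reverse inclusion $\partial_sG(x)\subset T$, after which the three operators coincide and Remark~\ref{rem proto} yields both proto-differentiability and $\partial G(x)=T$. I would take $(u,v)\in\partial_sG(x)$, witnessed by $t_n\downarrow0$ and $(u_n,v_n)\to(u,v)$ in $\mathbb{X}\times\mathbb{X}$ with $x+t_nu_n\in V$ and $v_n=\frac{G(x+t_nu_n)-G(x)}{t_n}$. With $h_n:=t_nu_n$ and the linearity of $T$, the Fr\'{e}chet remainder estimate reads
\begin{equation*}
\|v_n-Tu_n\|=\frac{1}{t_n}\bigl\|G(x+h_n)-G(x)-Th_n\bigr\|
=\gn{u_n}\,\frac{\|G(x+h_n)-G(x)-Th_n\|}{\gn{h_n}},
\end{equation*}
in which the last fraction tends to $0$ as $\gn{h_n}\to0$. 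Once $Tu_n\to v$ is known, the pair $(u_n,Tu_n)$ lies in the graph of $T$ and converges in $\mathbb{X}\times\mathbb{X}$ to $(u,v)$; since $T$ is assumed closed in $\mathbb{X}$, this forces $u\in D(T)=V$ and $v=Tu$, giving $(u,v)\in T$.

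I expect the step $Tu_n\to v$ to be the main obstacle. Fr\'{e}chet differentiability is measured in the graph norm $\gn{\cdot}=\|\cdot\|+\|S\cdot\|$, whereas the proto-limit lives in the ambient norm $\|\cdot\|$, so the displayed right-hand side is controlled only after bounding $\gn{u_n}=\|u_n\|+\|Su_n\|$; the quantity $\|Su_n\|$ carries no a priori bound merely from $u_n\to u$ in $\mathbb{X}$. I would resolve this by restricting to graph-norm-bounded approximating directions, so that $\gn{h_n}=t_n\gn{u_n}\to0$ and the right-hand side $\to0$, whence $\|v_n-Tu_n\|\to0$ and $Tu_n\to v$; closedness of $T$ in $\mathbb{X}$ is then exactly what upgrades the $\mathbb{X}$-convergence of $(u_n,Tu_n)$ to membership in the graph of $T$.

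For the final assertion with $H=S+G$, I would use that the closed linear operator $S$ is proto-differentiable everywhere with $\partial S(x)=S$ by Example~\ref{Ex-Aclosed}, together with the exact splitting $\frac{H(x+tw)-H(x)}{t}=Sw+\frac{G(x+tw)-G(x)}{t}$. Hence $H$ is Fr\'{e}chet differentiable from $(V,\gn{\cdot})$ to $(\mathbb{X},\|\cdot\|)$ with derivative $S+G'(x)$, and provided this derivative is closed in $\mathbb{X}$, part (ii) applied to $H$ delivers $\partial H(x)=S+G'(x)$. Alternatively I would run the two inclusions directly: the constant-direction argument of (i) gives $S+G'(x)\subset\partial_iH(x)$ (using $\partial G(x)=G'(x)$ and $\partial S(x)=S$), while the closedness argument above gives $\partial_sH(x)\subset S+G'(x)$.
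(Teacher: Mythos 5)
Your proposal follows the paper's own proof almost line for line: part (i) via constant directions $u_n=w$, and part (ii) via the chain $T\subset\partial_iG(x)\subset\partial_sG(x)\subset T$, where the last inclusion comes from the Fr\'{e}chet remainder estimate plus closedness of $T$. The problem is exactly the step you flagged, and your proposed repair does not close it. The inclusion $\partial_sG(x)\subset T$ must handle \emph{every} pair $(u,v)$ witnessed by \emph{some} sequence $(u_n,v_n)\to(u,v)$ in $\mathbb{X}\times\mathbb{X}$ with $x+t_nu_n\in V$; the definition in Remark~\ref{rem proto} imposes no graph-norm control whatsoever on the $u_n$. ``Restricting to graph-norm-bounded approximating directions'' therefore proves a strictly weaker statement --- in effect it silently redefines $\partial_s$ using convergence in $(V,\|\cdot\|+\|S\cdot\|)$ instead of in $\mathbb{X}$ --- and the sequences you discard are precisely the dangerous ones. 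Note also that along a general witnessing sequence the Fr\'{e}chet hypothesis cannot even be invoked: it requires $\|t_nu_n\|+\|S(t_nu_n)\|\to 0$, which does not follow from $\|u_n-u\|\to 0$.

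This gap is not a technicality that a sharper estimate could fix: part (ii) as stated is false. Take $\mathbb{X}=\ell^2$, let $S$ be the diagonal operator $Se_k=ke_k$ with $V=D(S)$, let $\chi\colon[0,\infty)\to[0,1]$ be continuous with $\chi=0$ on $[0,1]$ and $\chi=1$ on $[2,\infty)$, and set $G(z):=(1-\chi(\|Sz\|))Sz$ for $z\in V$. If $\|z\|+\|Sz\|\le 1$ then $G(z)=Sz$, so $G$ is Fr\'{e}chet differentiable at $0$ from $(V,\|\cdot\|+\|S\cdot\|)$ to $\mathbb{X}$ with derivative $T=S$, which is closed. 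Now pick $u\in\mathbb{X}\setminus D(S)$ and $u_n\in V$ with $u_n\to u$ in $\ell^2$; closedness of $S$ forces $\|Su_n\|\to\infty$, so setting $t_n:=2/\|Su_n\|$ (a subsequence makes this decrease to $0$) gives $\|S(t_nu_n)\|=2$, hence $G(t_nu_n)=0$ and $v_n:=t_n^{-1}\bigl(G(t_nu_n)-G(0)\bigr)=0$. Thus $(u,0)\in\mathrm{graph}(\partial_sG(0))$ although $u\notin D(T)$, and since closedness of $S$ also gives $\mathrm{dist}\bigl((u,0),\mathrm{graph}(T)\bigr)>0$, the graphs of the difference quotients do not converge to $\mathrm{graph}(T)$; so $\partial G(0)=T$ fails. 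For what it is worth, the paper's own proof of (ii) commits exactly the step you identified --- it drops the denominator $\|u_n\|+\|S(u_n)\|$ from the limit as though it were bounded, and applies the Fr\'{e}chet estimate without knowing $\|t_nu_n\|+\|S(t_nu_n)\|\to0$ --- so you have located a genuine flaw in the proposition itself, not merely in your own argument. The statement becomes true (and both your argument and the paper's become correct) only under an additional hypothesis, e.g.\ that the set convergence defining $\partial_sG(x)$ is taken with $u_n\to u$ in the graph norm of $S$.
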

\begin{proof}
Part (i): By definition, as  $G$ is a G\^{a}teaux differentiable operator from $(V, \gn{\cdot })$  to $(\mathbb{X},\| \cdot \|)$ and $T$ is its  G\^{a}teaux derivative at $x$, we have
\begin{equation}
\lim_{t\to 0} \frac{\| G(x+tw)-G(x)-T(tw)\|}{\gn{tw}} =0,
\end{equation}
where $w\in B_\varepsilon (x)\cap V$ for a sufficiently small positive $\varepsilon$.
Therefore,
\begin{equation}
\label{4.11}
0=\lim_{t\to 0} \frac{\| G(x+tw)-G(x)-T(tw)\|}{\| tw\| +\|S(tw)\| } .
\end{equation}
We will prove that $T\subset \partial_i G(x) \subset \partial _s G(x) \subset T$. Let $0\not= w\in D(T)$ and  $\{ t_n\} \downarrow 0$ be any sequence. By \eqref{4.11},
\begin{equation}
\label{4.12}
0= \lim_{t\to 0} \frac{\| G(x+t_nw)-G(x)-T(t_nw)\|}{\| t_nw\| +\|S(t_nw)\| } .
\end{equation}
Set 
$$
u_n=w,\quad v_n :=\frac{G(x+t_nw)-G(x)}{t_n}.
$$
Then, for $n$ large enough $x+t_nw\in B_\varepsilon (x)\cap V\subset D(G)$, and setting $y:=G(x)$ we have
$$
y+t_nv_n=G(x)+t_n \frac{G(x+t_nw)-G(x)}{t_n} = G(x+t_nw),
$$
so, $(x+t_nu_n,y+t_nv_n)\in \mathrm{graph}(G)$. Obviously, $u_n\to w$. We will show that
\begin{equation}\label{4.13}
v_n = \frac{G(x+t_nw)-G(x)}{t_n}\to Tw,
\quad \mbox{ as } n\to \infty.
\end{equation}
By \eqref{4.12} 
\begin{equation}
\lim_{n\to\infty} \frac{\left \| \frac{G(x+t_nw)-G(x)}{t_n}-T(w)\right \|}{\| w\| +\|S(w)\| }.
\end{equation}
This shows that \eqref{4.13} is valid. Finally, we have that $(w,Tw) \in \mathrm{graph}(\partial_i G(x))$. By the arbitrary nature of $w$, this yields that
\begin{equation}\label{4.14}
T\subset \partial_i (G)(x) .
\end{equation}

\medskip
\noindent
Part (ii): 
Suppose that $(u,v)\in  \mathrm{graph}(\partial_s(G))$. This means that there exists a sequence $\{ t_n\} \downarrow 0$ and sequence $(u_n,v_n)\to (u,v)\in \mathbb{X}\times \mathbb{X}$ such that $G(x+t_nu_n )=G(x)+t_nv_n$ for each $n$. As $G$ is Fr\'{e}chet differentiable at $x$ we have
\begin{equation}
\lim_{n\to \infty} \frac{\| G(x+t_nu_n)-G(x)-T(t_nu_n)\|}{\gn{t_nu_n}} =0,
\end{equation}
so, as $u_n\to u\not=0$,
\begin{align}
0
&= \lim_{n\to\infty} \frac{\left \|  \frac{G(x+t_nu_n)-G(x)}{t_n}-T(u_n)\right \|}{\| u_n\| +\|S(u_n)\| }\\
&= \lim_{n\to\infty}\left \| \frac{G(x+t_nu_n)-G(x)}{t_n}-T(u_n)\right \| \\
&= \lim_{n\to\infty}\|  v_n-T(u_n)\| .
\end{align}
As $v_n\to v\in \mathbb{X}$, we have that $Tu_n\to v$ as $n\to \infty$. Since $T$ is a closed operator in $\mathbb{X}$ this yields that $u\in D(T)$ and $Tu=v$. In other words, $\partial_sG(x)\subset T$. Combined with Part (i) and the fact from the definition that  $\partial _iG(x)\subset \partial_sG(x)$ we have 
$$
T\subset \partial _iG(x)\subset \partial_sG(x)\subset T.
$$
This yields that $\partial G(x)$ exists and is equal to $T$, completing the proof.
\end{proof}

\begin{proposition}\label{pro-AFrechet}
Consider $x' = G(x) = Ux + F(x)$, where the linear operator $U$ is the generator of a $C_0$-semigroup in $\mathbb{X}$ and $F(\cdot )$ is Fr\'{e}chet differentiable at $x\in \mathbb{X}$.
Then, $G$ is proto-differentiable, and 
\begin{equation}
\partial G(x) = U + F'(x).
\end{equation}
\end{proposition}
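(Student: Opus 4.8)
The plan is to recognize this proposition as essentially the ``moreover'' clause of Proposition~\ref{M-pro gateau}(ii), applied with the closed operator $S$ chosen to be the generator $U$ itself. Concretely, I would set $S := U$, which is closed because it generates a $C_0$-semigroup, and equip $V := D(U)$ with the graph norm $\gn{w} = \|w\| + \|Uw\|$. Then $G = U + F$ plays the role of the operator $H = S + (\text{the proposition's } G)$, where that proposition's ``$G$'' is taken to be our $F$ and $D(G) = D(U) = V$. If the hypotheses of Proposition~\ref{M-pro gateau}(ii) can be verified for this choice, its conclusion reads $\partial G(x) = S + F'(x) = U + F'(x)$, which is exactly the desired identity. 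I would also remark that Assumption~\ref{AsptD(G)} is trivially satisfied, since $D(G) = V$ is the whole subspace $V$ and hence open in it.

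The single point that genuinely requires an argument is that $F$, which is assumed Fréchet differentiable only with respect to the ambient norm $\|\cdot\|$ of $\mathbb{X}$, is also Fréchet differentiable as a map from $(V,\gn{\cdot})$ into $(\mathbb{X},\|\cdot\|)$, and that its derivative there is closed in $\mathbb{X}$. For the first part I would exploit that the graph norm dominates the ambient norm, i.e.\ $\gn{h}\ge \|h\|$ for every $h\in V$, so that for $h\in V$,
\[
\frac{\|F(x+h)-F(x)-F'(x)h\|}{\gn{h}} \le \frac{\|F(x+h)-F(x)-F'(x)h\|}{\|h\|},
\]
and, since $\gn{h}\to 0$ forces $\|h\|\to 0$, the right-hand side tends to $0$ by the $\mathbb{X}$-Fréchet differentiability of $F$. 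Hence $F$, restricted to $V$, is Fréchet differentiable at $x$ from $(V,\gn{\cdot})$ to $(\mathbb{X},\|\cdot\|)$, with derivative the restriction of $F'(x)$. Viewed as an operator in $\mathbb{X}$, this derivative is the bounded operator $F'(x)\in\mathcal{L}(\mathbb{X})$, which is automatically closed, so both hypotheses of Proposition~\ref{M-pro gateau}(ii) hold and the proposition applies verbatim to give $\partial G(x) = U + F'(x)$.

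I expect the transfer of Fréchet differentiability from the ambient norm to the graph norm to be the only conceptual step, and once the domination $\gn{\cdot}\ge\|\cdot\|$ is used it is routine; no real difficulty remains beyond carefully matching the roles of $U$, $F$, $S$, and $H$ when invoking Proposition~\ref{M-pro gateau}. As an equivalent alternative one could instead apply the first assertion of Proposition~\ref{M-pro gateau}(ii) directly to $G=U+F$: on $(V,\gn{\cdot})$ the operator $U$ is bounded (as in Example~\ref{Ex-Aclosed}) and $F$ is Fréchet differentiable by the estimate above, so $G$ is Fréchet differentiable there with derivative $U+F'(x)$, which is closed as the sum of a closed and a bounded operator, again yielding $\partial G(x)=U+F'(x)$.
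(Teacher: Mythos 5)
Your proposal reaches the correct conclusion but by a genuinely different route than the paper. The paper proves Proposition~\ref{pro-AFrechet} directly from the definition of proto-differentiability (in the form of Remark~\ref{rem proto}): it sets $W := U + F'(x)$ and establishes the chain $W \subset \partial_i G(x) \subset \partial_s G(x) \subset W$ by hand, working with the plain norm $\|\cdot\|$ of $\mathbb{X}$ throughout --- the Fr\'{e}chet differentiability of $F$ with respect to $\|\cdot\|$ gives the first inclusion, and the closedness of $W$ (closed $U$ plus bounded $F'(x)$) gives the last. You instead reduce the statement to Proposition~\ref{M-pro gateau}(ii) by taking $S := U$, equipping $V = D(U)$ with the graph norm, and transferring the Fr\'{e}chet differentiability of $F$ from $(\mathbb{X},\|\cdot\|)$ to $(V,\gn{\cdot})$ via the domination $\gn{h} \ge \|h\|$. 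That transfer estimate is correct and is the real content of your reduction; what it buys is economy --- no repetition of the $\partial_i$/$\partial_s$ bookkeeping, which the paper essentially copies from the proof of Proposition~\ref{M-pro gateau} --- at the cost of routing a statement about the ambient norm through graph-norm machinery that the direct proof never needs. Both arguments ultimately rest on the same closedness fact for $U+F'(x)$.

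One caveat on your primary route (invoking the ``moreover'' clause with the proposition's $G$ taken to be $F$ alone): the Fr\'{e}chet derivative of $F\big|_V$ as a map from $(V,\gn{\cdot})$ to $\mathbb{X}$ is the operator $F'(x)\big|_V$ with domain $V = D(U)$, and when $D(U)$ is a proper dense subspace of $\mathbb{X}$ this operator is \emph{not} closed in $\mathbb{X}$ --- it is merely closable, with closure $F'(x)$. So the hypothesis of Proposition~\ref{M-pro gateau}(ii) that the derivative ``is a closed operator in $\mathbb{X}$'' is not literally satisfied for $F$ alone, and your phrase ``viewed as an operator in $\mathbb{X}$, this derivative is the bounded operator $F'(x)\in\mathcal{L}(\mathbb{X})$'' silently replaces the derivative by its closed extension. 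Your alternative route in the final paragraph avoids this issue entirely and is the correct way to carry out the reduction: apply the main assertion of Proposition~\ref{M-pro gateau}(ii) to $G = U+F$ itself, whose graph-norm Fr\'{e}chet derivative $\left(U+F'(x)\right)\big|_{D(U)}$ \emph{is} closed in $\mathbb{X}$ and is exactly the operator named in the conclusion. Taking that as the proof, the argument is complete.
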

\begin{proof}
We have
\begin{align*}
D_t(w)
=
\frac{G(u+tw)-G(u)}{t}
&=
\frac{U(u+tw) - Uu + F(u+tw)-F(u)}{t}
\\
&=
Uw
+
\frac{F(u+tw)-F(u)}{t}.
\end{align*}
Since $F(x)$ is Fr\'{e}chet differentiable, we have
$$
\lim_{t \rightarrow 0}
\left (
\frac{F(u+tw)-F(u)}{t}
-
F'(u)
\right )
= 0.
$$
Set $W:= U+F'(x)$. We will show that 
\begin{equation}\label{4-22}
W\subset \partial_iG(x)\subset \partial_s G(x)\subset W.
\end{equation}
First, we show that $W\subset \partial_iG(x)$.
Let $0\not= w\in D(W)$ and  $\{ t_n\} \downarrow 0$ be any sequence. By assumption,
\begin{align}
0&= \lim_{t\to 0} \frac{\| F(x+t_nw)-F(x)-F'(x)(t_nw)\|}{\| t_nw\| }\nonumber \\
&= \lim_{t\to 0} \frac{\| G(x+t_nw)-G(x)-W(t_nw)\|}{\| t_nw\| } \\
&= \lim_{n\to\infty} \frac{\left \|\frac{G(x+t_nw)-G(x)}{t_n}-W(w)\right \|}{\| w\|  }. \label{4.12-1}
\end{align}
Set 
$$
u_n=w,\quad v_n :=\frac{G(x+t_nw)-G(x)}{t_n}.
$$
Then, for $n$ large enough $x+t_nw\in B_\varepsilon (x)\cap V\subset D(G)$, and setting $y:=G(x)$ we have
$$
y+t_nv_n=G(x)+t_n \frac{G(x+t_nw)-G(x)}{t_n} = G(x+t_nw),
$$
so, $(x+t_nu_n,y+t_nv_n)\in \mathrm{graph}(G)$. Obviously, $u_n\to w$. We will show that
\begin{equation}\label{4.13-1}
v_n = \frac{G(x+t_nw)-G(x)}{t_n}\to Ww
\end{equation}
as $n\to \infty$. By \eqref{4.12-1}
\begin{equation}
\lim_{n\to\infty} \frac{\left \| \frac{G(x+t_nw)-G(x)}{t_n}-W(w)\right \|}{\| w\|  }.
\end{equation}
This shows that \eqref{4.13-1} is valid. Finally, this shows that $(w,Ww)\in \mathrm{graph}(\partial_i G(x))$. By the arbitrary nature of $w$, this yields that
\begin{equation}\label{4.14-1}
W\subset \partial_i (G)(x) .
\end{equation}
Next, we will show that $\partial_s G(x) \subset W$. Suppose that $(u,v)\in \mathrm{graph}(\partial_s(G))$. This means that there exists a sequence $\{ t_n\} \downarrow 0$ and sequence $(u_n,v_n)\to (u,v)\in \mathbb{X}\times \mathbb{X}$ such that $G(x+t_nu_n )=G(x)+t_nv_n$ for each $n$. As $F$ is Fr\'{e}chet differentiable at $x$ we have
\begin{align*}
0&= \lim_{t\to 0} \frac{\| F(x+t_nu_n)-F(x)-F'(x)(t_nu_n)\|}{\| t_nu_n\|} \\
&= \lim_{t\to 0} \frac{\| G(x+t_nu_n)-G(x)-W(t_nu_n)\|}{\| t_nu_n\|}\\
& =0,
\end{align*}
so, as $u_n\to u\not=0$,
\begin{align*}
0&= \lim_{n\to\infty} \frac{\left \|  \frac{G(x+t_nu_n)-G(x)}{t_n}-T(u_n)\right \|}{\| u_n\| +\|S(u_n)\| }\\
&= \lim_{n\to\infty}\left \| \frac{G(x+t_nu_n)-G(x)}{t_n}-W(u_n)\right \| \\
&= \lim_{n\to\infty}\|  v_n-W(u_n)\| .
\end{align*}
As $v_n\to v\in \mathbb{X}$, $Wu_n\to v$ as $n\to \infty$. Since $U$ is a closed operator in $\mathbb{X}$ and $F'(x)$ is a bounded operator, the operator $W$ is closed, so this yields that $u\in D(W)$ and $Wu=v$. In other words, $\partial_sG(x)\subset W$. Finally, \eqref{4-22} holds true,
completing the proof.
\end{proof}

\subsubsection{Families of Linear Operators Depending Continuously on a Parameter}
Recall that $d_{Y}(U,V)$ stands for the Yosida distance between two closed operators $U$ and $V$ in $\mathbb{X}$.
\begin{definition}\label{def cont}  
Let $A_\alpha$ be a family of (possibly unbounded) linear operators in $\mathbb{X}$ with parameter $\alpha \in S$, where $S$ is a metric space
satisfying 
$d_{Y}(A_{\alpha_0}, A_\beta )<\infty$ for every $\beta \in S$ and $\alpha_0$ is a certain element of $S$. The family of such operators $A_\alpha$ is said to be \textit{continuous} at $\alpha_0\in S$ if 
\begin{equation}
\lim_{\alpha \to \alpha_0}
d_{Y}(A_\alpha, A_{\alpha_0})
=
0.
\end{equation}
\end{definition}

\begin{example}
Let $A$ be the generator of a $C_0$-semigroup. Then, the family $A_C:=\{A+C, C\in \mathcal{L}(\mathbb{X})\}$ is continuous at $\alpha_0:=C=0$. In fact, as in Lemma~\ref{lem 1}, if the semigroup $T(t)$ generated by $A$ satisfies $\|T(t)\| \le Me^{\omega t}$, then
$$
d_{Y}(A,A+C)\le M^2 \|C\| .
$$
Therefore, when $C\to 0$ in $\mathcal{L}(\mathbb{X})$, $d_{Y}(A,A+C)\to 0$.
\end{example}

\begin{example}
Let $A$ be the generator of an analytic semigroup and $C$ be an $A$-bounded operator. Since $D(C)\supset D(A)$ and there are constants $a,\,c$ such that
$$
\| Cx\| \le a\| Ax\| +c\| x\| ,
$$
the restriction of $C$ on $D(A)$ is a bounded linear operator from $(D(A), \gn{\cdot})$ to $(\mathbb{X}, \|\cdot\| )$, where
$$
\gn{x}:=\|x\| +\|Ax\| , \ \quad x\in D(A).
$$
If we define $\mathcal{F}$ to be the family of all operators of the form $A_C:=A+C$ where $C$ is $A$-bounded, and the metric space $S$ as $\mathcal{L}((D(A),\gn{\cdot}),\mathbb{X})$, then
$$
\lim_{C\to 0} d_{Y}(A_C,A_0)=0.
$$
\end{example}

\subsubsection{Existence of Invariant Manifolds}
This subsection deals with the existence of invariant manifolds.

\begin{assumption}\label{M-assump124}
\
\begin{enumerate}[\quad \rm ({A2.}1)]
\item\label{AsptionA21}
There exists a real number $\omega$ such that $\omega I -A$ is m-accretive;
\item\label{AsptionA22}
The proto-derivative $\partial A(x)$ exists for each $x\in D(A)$ as a single-valued linear operator in $\mathbb{X}$ such that $\omega I-A$ is m-accretive.
\item\label{AsptionA23} 
Yosida distance $d_{Y}(\partial A(x),\partial A(0))$ satisfies
\begin{equation}
\sup_{x\in D(A)} d_{Y}(\partial A(x),\partial A(0))=\varepsilon <\infty .
\end{equation}
\end{enumerate}
\end{assumption}

The assumption yields that the operator $A$ generates a nonlinear semigroup in $\overline{D(A)}$ by the Crandall-Liggett Theorem (Theorem~\ref{the cra}). 

Let us consider the following family of equations associated with Eq. \eqref{nonlinear eq}
\begin{equation}\label{TM-Elam}
\left \{
\begin{aligned}
\frac{d}{dt} u_{\lambda}(t) 
& =
A^{\lambda} u_{\lambda} (t),\quad 0 \le t \le 1,
\\
u_{\lambda}(0) & = x,
\end{aligned}
\right .
\tag{$E_\lambda$}
\end{equation}
where 
\begin{equation}
J_\lambda^A:= (I-\lambda A)^{-1}, 
\quad
A^\lambda := \frac{1}{\lambda} \left( J^A_\lambda -I\right),
\end{equation}
for $\lambda >0$ and $\lambda \omega <1$. Note that with our notations if $U$ is a linear operator, then
$$
J^U_\lambda =(I-\lambda U)^{-1}= \frac{1}{\lambda }R\left (\frac{1}{\lambda}, U\right ).
$$
Therefore,
$$
U^\lambda = \frac{1}{\lambda} \left( \frac{1}{\lambda }R\left (\frac{1}{\lambda}, U\right ) -I\right)= U_{1/\lambda},
$$
where $U_\xi$ is the Yosida approximation of a linear operator $U$ with parameter $\xi$. Therefore, the Yosida distance between two linear operators $U$ and $V$ can be written as
\begin{align}
d_Y(U,V)&= \limsup_{\lambda \downarrow 0} \frac{1}{\lambda^2} \left \| R\left (\frac{1}{\lambda}, U\right )-R\left (\frac{1}{\lambda}, V\right )\right \|
\nonumber\\
&= \limsup_{\lambda \downarrow 0} \frac{1}{\lambda^2}\left \| \left (\frac{1}{\lambda} -U\right )^{-1} -\left (\frac{1}{\lambda} -V\right )^{-1} \right \|
\nonumber \\
&= \limsup_{\lambda \downarrow 0}  \frac{1}{\lambda} \left \| J^U_\lambda -J^V_\lambda \right \|
\label{Yo-1lambdaJUV}
\\
&=  \limsup_{\lambda \downarrow 0}  \frac{1}{\lambda} \left \| U_{1/\lambda}-V_{1/\lambda} \right \| . \label{m 4.36}
\end{align}
Recall that if $Q$ is a nonlinear operator such that $\omega I-Q$ is m-accretive if and only if 
\begin{equation}
\left \| J^Q_\lambda x-J^Q_\lambda y\right \| \le \frac{1}{1-\lambda \omega} \| x-y\|,
\quad x,\,y \in \mathbb{X}.
\end{equation}

By Kato~\cite[Lemmas 1.1, 1.2]{kat}, $A^\lambda$ is proto-differentiable in a neighborhood of $0$ and $\partial A^\lambda (x) =dA^\lambda (x)$ for $x$ in a small neighborhood of $0$. Consider the linearized equation for equation \eqref{TM-Elam} along a solution $S_\lambda (t)x_0$
\begin{equation}\label{TM-Llamw}
\left \{
\begin{aligned}
\frac{d}{dt} v_{\lambda}(t) 
+
d A^{\lambda} (S_{\lambda} (t)x_0) v_{\lambda} (t)
& = 0,
\quad 0 \le t \le 1,
\\
v_{\lambda}(0) 
& = w.
\end{aligned}
\right .
\tag{$L_{\lambda};w$}
\end{equation}
Since for each sufficiently small (but positive) $\lambda$,  $A^\lambda (x)$ is a Lipschitz operator with $A^\lambda (0)=0$, it generates a a semigroup $(S_\lambda (t))_{t\ge 0}$. Moreover, $u_\lambda (t):=S_\lambda (t)x$ is the classical solution of \eqref{TM-Elam}. By Kato~\cite[Proposition 3.1]{kat} the solution of the Cauchy problem \eqref{TM-Elam} satisfies
\begin{equation}\label{m v}
\| v_\lambda (t)\| \le e^{\mu t}\| w\| ,
\end{equation}
where $\mu =\omega /(1-\lambda \omega )$, $0<\lambda < 1/\omega.$

\begin{lemma}
Define $\mathcal{S}_{\lambda} \colon \mathbb{X} \rightarrow C([0,1], \mathbb{X})$ by
\begin{equation}
(\mathcal{S}_{\lambda}x)(t) = S_{\lambda}(t)x,
\quad \mbox{ for } t \in [0, 1].
\end{equation}
Then, we have
\begin{enumerate}
\item $\mathcal{S}_{\lambda}$ is G\^{a}teaux differentiable at each $x \in B_r(0)$ and the G\^{a}teaux derivative $d\mathcal{S}_{\lambda}(x)$ represents a unique solution of \eqref{TM-Llamw},
\begin{equation}
d\mathcal{S}_{\lambda}(x)w = v_{\lambda} (\cdot; 0, w)
\end{equation}
is solution of \eqref{TM-Llamw}.

\item Fix $w \in \mathbb{X}$. The mapping $z \mapsto d\mathcal{S}_{\lambda}(z)w$ is continuous from $B_r(0)$ into $C([0,1], \mathbb{X})$.

\item For $x,\, y \in B_r(0)$, 
\begin{equation}\label{Kato-f3.4}
\mathcal{S}_{\lambda}y - \mathcal{S}_{\lambda}x = \int_{0}^{1} d\mathcal{S}_{\lambda} (\theta y + (1-\theta)x)(y-x)d\theta
\end{equation}
in $C([0,1], \mathbb{X})$.
\end{enumerate}
\end{lemma}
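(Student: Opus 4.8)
The plan is to follow the classical variational-equation approach adapted to the regularized operator $A^\lambda$, exploiting that $A^\lambda$ is globally Lipschitz (with some constant $L_\lambda$) and satisfies $A^\lambda(0)=0$, so that $S_\lambda(t)$ is a well-defined Lipschitz flow on $B_r(0)$ and its G\^ateaux derivative, if it exists, is forced to solve the linearized problem \eqref{TM-Llamw}.

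First I would prove Part (1). Writing \eqref{TM-Elam} in its integral (Duhamel) form $S_\lambda(t)x = x + \int_0^t A^\lambda(S_\lambda(s)x)\,ds$, I form the difference quotient $z_\tau(t) := \tau^{-1}\bigl(S_\lambda(t)(x+\tau w) - S_\lambda(t)x\bigr)$. Subtracting the two integral equations and using the Lipschitz bound on $A^\lambda$ gives $\|z_\tau(t)\| \le \|w\| + L_\lambda\int_0^t\|z_\tau(s)\|\,ds$, so Gronwall's inequality yields a bound on $\|z_\tau(t)\|$ that is uniform in $\tau$ and $t\in[0,1]$. Next, since $A^\lambda$ is (continuously) G\^ateaux differentiable near $0$ with $\partial A^\lambda = dA^\lambda$ by Kato~\cite{kat}, I expand $A^\lambda(S_\lambda(s)x+\tau z_\tau(s)) - A^\lambda(S_\lambda(s)x) = \tau\,dA^\lambda(S_\lambda(s)x)z_\tau(s) + \tau\,r_\tau(s)$ with a remainder $r_\tau(s)\to 0$, and compare $z_\tau$ with the solution $v_\lambda(\cdot;0,w)$ of \eqref{TM-Llamw}. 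A final Gronwall estimate shows $z_\tau \to v_\lambda(\cdot;0,w)$ in $C([0,1],\mathbb{X})$ as $\tau\to 0$, proving both the existence of the G\^ateaux derivative and the identity $d\mathcal{S}_\lambda(x)w = v_\lambda(\cdot;0,w)$; uniqueness of the solution of \eqref{TM-Llamw} follows from the bound $\|dA^\lambda(y)\|\le L_\lambda$ via Gronwall.

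For Part (2) I fix $w$ and set $v_i := d\mathcal{S}_\lambda(z_i)w$, the solution of \eqref{TM-Llamw} along $S_\lambda(\cdot)z_i$, $i=1,2$. The difference $v_1 - v_2$ then solves a linear problem whose inhomogeneity is $\bigl[dA^\lambda(S_\lambda(t)z_1)-dA^\lambda(S_\lambda(t)z_2)\bigr]v_2(t)$. Combining the Lipschitz dependence of $t\mapsto S_\lambda(t)z$ on the initial datum $z$ (again Gronwall), the continuity of $y\mapsto dA^\lambda(y)$ from Kato~\cite{kat}, and the a priori bound $\|v_2(t)\|\le e^{\mu t}\|w\|$, the inhomogeneity tends to $0$ uniformly on $[0,1]$ as $z_1\to z_2$; one more Gronwall estimate gives $\|v_1-v_2\|_{C([0,1],\mathbb{X})}\to 0$, i.e.\ the asserted continuity.

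Finally, Part (3) is a consequence of (1)--(2): the curve $\theta\mapsto \mathcal{S}_\lambda(\theta y+(1-\theta)x)$ in $C([0,1],\mathbb{X})$ is, by Part (1) and the chain rule, differentiable in $\theta$ with derivative $d\mathcal{S}_\lambda(\theta y+(1-\theta)x)(y-x)$, which by Part (2) is continuous in $\theta$; the fundamental theorem of calculus for $C^1$ curves in a Banach space then gives \eqref{Kato-f3.4}. The main obstacle is Part (1): controlling the linearization remainder $r_\tau(s)$ uniformly in $s\in[0,1]$ as $\tau\to 0$, since a priori we only have G\^ateaux (not Fr\'echet) differentiability of $A^\lambda$; this is overcome using the continuity of the derivative $dA^\lambda$ together with the uniform boundedness of the difference quotients $z_\tau$, after which every remaining step reduces to a Gronwall argument.
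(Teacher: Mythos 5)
Your proposal is correct in substance, but note that the paper gives no argument at all for this lemma: its entire proof is the citation of Kato~\cite[Proposition 3.2, Lemma 3.3, and Lemma 3.4]{kat}. What you have written is essentially a reconstruction of Kato's own proofs: the uniform-in-$\tau$ Gronwall bound on the difference quotients, the linearization of $A^\lambda$ along the trajectory with a remainder controlled by continuity of the derivative, a second Gronwall estimate identifying the limit with the solution of the variational equation, and the integral mean value formula \eqref{Kato-f3.4} obtained from parts (1)--(2) by the fundamental theorem of calculus applied to the $C^1$ curve $\theta\mapsto\mathcal{S}_\lambda(\theta y+(1-\theta)x)$. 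So your route and the (implicit) route of the paper coincide; yours simply makes the argument explicit rather than outsourcing it.

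Two caveats. First, the operator-norm continuity of $y\mapsto dA^\lambda(y)$, on which both your part (1) and part (2) lean, is \emph{not} a consequence of Assumption~\ref{M-assump124} of the present paper: condition (A2.3) bounds a Yosida distance, i.e. a $\limsup$ as the resolvent parameter tends to infinity, and gives no information at a fixed $\lambda$. That continuity is one of Kato's standing hypotheses; in the present paper it resurfaces as the resolvent estimate \eqref{kat-p56}, which combined with the identities $dA^\lambda(x)=\frac{1}{\lambda}\left(dJ^A_\lambda(x)-I\right)$ and $dJ^A_\lambda(x)=\left(I-\lambda\,\partial A\left(J^A_\lambda x\right)\right)^{-1}$ yields exactly what you need. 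Your appeal to Kato is therefore legitimate, but it should be stated as an imported hypothesis rather than as something freely available in the paper's own framework. Second, a point of bookkeeping: \eqref{TM-Llamw} is written in the paper with a plus sign, $v'+dA^\lambda(S_\lambda(t)x_0)v=0$, which is inconsistent with \eqref{TM-Elam} in the form $u'=A^\lambda u$; your comparison equation $v'=dA^\lambda(S_\lambda(t)x)v$ is the correct variational equation, and it is the one for which the asserted identity $d\mathcal{S}_\lambda(x)w=v_\lambda(\cdot;0,w)$ actually holds, the sign in \eqref{TM-Llamw} being evidently a slip inherited from Kato's convention $u'+Au\ni 0$.
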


\begin{proof}
See Kato~\cite[Proposition 3.2, Lemma 3.3, and Lemma 3.4]{kat}.
\end{proof}

\begin{lemma}\label{M-new}
Let $A(\cdot ),\, B(\cdot )$ be continuous functions from $[0,1]$ to $\mathcal{L}(\mathbb{X})$. Assume that $x(t)$ and $y(t)$ are the solutions of the Cauchy problems
\begin{align*}
& x'(t)=A(t)x(t), \quad x(0)= w,
\\
& y'(t)=B(t)y(t),\quad y(0)=w,
\end{align*}
respectively. Moreover, assume that the evolution operators  of these two equations satisfy 
$$
\| X(t,s)\| \le Me^{\omega (t-s)},\quad 
\| Y(t,s)\| \le Me^{\omega (t-s)},
\qquad
0 \le s \le t \le 1.
$$
Then,
\begin{equation}
\| x(t)-y(t)\| \le Me^{2\omega} \sup_{0\le \tau \le 1} \| A(\tau )-B(\tau)\| \cdot \| w\| .
\end{equation}
\end{lemma}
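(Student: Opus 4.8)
The plan is to reduce everything to a variation-of-constants representation for the difference of the two flows. Since $A(\cdot)$ and $B(\cdot)$ take values in $\mathcal{L}(\mathbb{X})$ and are continuous on $[0,1]$, each Cauchy problem admits a genuine evolution family, namely the operators $X(t,s)$ and $Y(t,s)$ appearing in the hypothesis; these are given by the usual Dyson--Peano series, are strongly continuous and continuously differentiable in both time arguments, and satisfy $x(t)=X(t,0)w$, $y(t)=Y(t,0)w$ together with the differential relations $\partial_t X(t,s)=A(t)X(t,s)$ and $\partial_s X(t,s)=-X(t,s)A(s)$ (and likewise for $Y$). Because every operator involved is bounded, no domain questions arise, so these identities hold on all of $\mathbb{X}$.

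First I would fix $t\in[0,1]$ and differentiate the product $s\mapsto X(t,s)Y(s,0)$ on $[0,t]$. By the product rule and the relations above,
\begin{equation*}
\frac{d}{ds}\bigl[X(t,s)Y(s,0)\bigr]
= -X(t,s)A(s)Y(s,0)+X(t,s)B(s)Y(s,0)
= -X(t,s)\bigl(A(s)-B(s)\bigr)Y(s,0).
\end{equation*}
Integrating over $[0,t]$ and using $X(t,t)=Y(0,0)=I$ yields the difference formula
\begin{equation*}
X(t,0)-Y(t,0)=\int_0^t X(t,s)\bigl(A(s)-B(s)\bigr)Y(s,0)\,ds.
\end{equation*}
Applying both sides to $w$ gives the representation $x(t)-y(t)=\int_0^t X(t,s)(A(s)-B(s))Y(s,0)w\,ds$. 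Equivalently, one may set $z=x-y$, observe that $z'(t)=A(t)z(t)+(A(t)-B(t))y(t)$ with $z(0)=0$, and apply Duhamel's formula with the evolution family of $A$; this leads to the same integral representation.

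Then I would simply take norms and insert the exponential bounds. Estimating the factor $A(s)-B(s)$ by $\sup_{0\le\tau\le1}\|A(\tau)-B(\tau)\|$ and using $\|X(t,s)\|\le Me^{\omega(t-s)}$ and $\|Y(s,0)\|\le Me^{\omega s}$, the integrand is pointwise bounded by a constant multiple of $e^{\omega t}\sup_\tau\|A(\tau)-B(\tau)\|\,\|w\|$ that is independent of $s$; integrating over $s\in[0,t]\subset[0,1]$ and using $t\le1$ together with $e^{\omega t}\le e^{2\omega}$ on $[0,1]$ then produces the asserted estimate. (One should keep an eye on the precise power of $M$ and the exact exponential constant here, since the crude bounds naturally give a product of the two semigroup constants; this is the only bookkeeping subtlety.)

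The computation is entirely routine once the difference formula is in hand, so the only step that genuinely needs care — and hence the main obstacle — is the justification of the product-rule differentiation of $s\mapsto X(t,s)Y(s,0)$, that is, the joint continuous differentiability of the evolution operators in both time variables. This is exactly where the hypothesis that $A(\cdot),B(\cdot)$ are continuous with values in $\mathcal{L}(\mathbb{X})$ is used: boundedness removes all domain obstructions and the Dyson series furnishes the required regularity, so the interchange of differentiation and the product is legitimate and the argument goes through.
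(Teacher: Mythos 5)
Your proof is correct and takes essentially the same route as the paper: the paper rewrites $y'(t)=A(t)y(t)+(B(t)-A(t))y(t)$ and applies the variation-of-constants formula to obtain exactly the integral representation you derive (your differentiation of $s\mapsto X(t,s)Y(s,0)$ is just an alternative derivation of the same Duhamel formula, and you yourself note the equivalence), followed by the same norm estimates. Your parenthetical caution about constants is warranted: the computation naturally yields a bound with $M^2e^{\omega}$ (two factors of $M$), whereas the lemma asserts $Me^{2\omega}$ --- a bookkeeping discrepancy present in the paper's own proof as well, and harmless in the paper's application where the semigroups are quasi-contractive so that $M=1$.
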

\begin{proof}
We have
$$
y'(t)=A(t)y(t)+(B(t)-A(t))y(t),
$$
so
$$
y(t)= X(t,0)w+\int^t_0 X(t,\tau ) (B(\tau )-A(\tau ))y(\tau )d\tau .
$$
Next,
\begin{align}
\| x(t)-y(t)\| & \le \int^t_0 \| X(t,\tau )\| \cdot \| B(\tau )-A(\tau )\| \cdot \|y(\tau )\| d\tau 
\notag\\
& \le Me^{2\omega} \sup_{0\le \tau \le 1} \| A(\tau )-B(\tau)\| \cdot \| w\| .
\end{align}
This completes the proof.
\end{proof}

\begin{corollary}\label{M-ss2d}
Under Assumption~\ref{M-assump124},
let $v_{\lambda}^{x}(t)$ and $v_{\lambda}^{0}(t)$ be solutions
to \eqref{TM-Llamw} with $x_0=x$ and $x_0=0$, respectively.
There exists $0 < r_0 < r$ such that 
$\|S_{\lambda}(t)x\| \le e^{2\omega t}\|x\| < r/2$, for small enough $\lambda$,
for all $x \in \mathbb{X}$
and $0 < t < 1$ 
the following estimate hold true
\begin{align}
\left \| v^x_\lambda (t) -v^0_\lambda (t)\right \|
\le 
e^{2\mu} \| w\|
\sup_{z \in \mathbb{X}}
d_Y(\partial A(z),\partial A (0)),
\end{align}
where $\mu =\omega /(1-\lambda \omega )$.
\end{corollary}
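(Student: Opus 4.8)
The plan is to realize both $v^x_\lambda$ and $v^0_\lambda$ as solutions of \emph{linear} Cauchy problems with bounded, continuously varying coefficients and then invoke Lemma~\ref{M-new}. First I would record two structural facts. Since $A^\lambda(0)=0$, the constant function $0$ solves \eqref{TM-Elam} with initial value $0$, hence $S_\lambda(t)0=0$ for all $t$; consequently the equation \eqref{TM-Llamw} for $v^0_\lambda$ has the \emph{constant} coefficient $-dA^\lambda(0)$, whereas $v^x_\lambda$ solves the same equation with the time-dependent coefficient $-dA^\lambda(S_\lambda(t)x)$. By the Kato lemmas already invoked before \eqref{TM-Llamw}, for small $\lambda$ the Yosida approximation $A^\lambda$ is Lipschitz with proto-derivative $\partial A^\lambda(\cdot)=dA^\lambda(\cdot)\in\mathcal L(\mathbb X)$, and $t\mapsto dA^\lambda(S_\lambda(t)x)$ is a continuous $\mathcal L(\mathbb X)$-valued function on $[0,1]$ (using the continuity asserted in the preceding lemma and that $S_\lambda(\cdot)x$ stays in $B_{r/2}(0)$ by the standing bound $\|S_\lambda(t)x\|\le e^{2\omega t}\|x\|<r/2$). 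Thus Lemma~\ref{M-new} is applicable.

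Next I would supply the growth bounds. Estimate \eqref{m v} states that every solution of \eqref{TM-Llamw} satisfies $\|v_\lambda(t)\|\le e^{\mu t}\|w\|$ with $\mu=\omega/(1-\lambda\omega)$; this is exactly the assertion that the evolution families of both the $x$- and the $0$-equation have norm $\le e^{\mu(t-s)}$, i.e. Lemma~\ref{M-new} applies with $M=1$ and growth rate $\mu$. Feeding the coefficients $A(\tau)=-dA^\lambda(S_\lambda(\tau)x)$ and $B(\tau)=-dA^\lambda(0)$ into Lemma~\ref{M-new} then gives
$$
\|v^x_\lambda(t)-v^0_\lambda(t)\|\le e^{2\mu}\,\|w\|\sup_{0\le\tau\le1}\bigl\|dA^\lambda(S_\lambda(\tau)x)-dA^\lambda(0)\bigr\|.
$$

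The conceptually central step is to convert the coefficient difference into a Yosida distance. Differentiating the resolvent equation $J^A_\lambda=(I-\lambda A)^{-1}$ implicitly yields $dJ^A_\lambda(y)=(I-\lambda\,\partial A(J^A_\lambda y))^{-1}$, and therefore $dA^\lambda(y)=\tfrac1\lambda\bigl(dJ^A_\lambda(y)-I\bigr)=\bigl(\partial A(J^A_\lambda y)\bigr)_{1/\lambda}$, the Yosida approximation of $\partial A(J^A_\lambda y)$ at parameter $1/\lambda$; in particular $dA^\lambda(0)=(\partial A(0))_{1/\lambda}$ because $J^A_\lambda 0=0$. Writing $z_\tau:=J^A_\lambda S_\lambda(\tau)x\in D(A)$, the coefficient difference becomes $\bigl\|(\partial A(z_\tau))_{1/\lambda}-(\partial A(0))_{1/\lambda}\bigr\|$, which, by Definition~\ref{def 1} with $\mu=1/\lambda\to+\infty$ (cf.\ \eqref{m 4.36}), is a single term of the family whose $\limsup$ is $d_Y(\partial A(z_\tau),\partial A(0))$. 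Bounding it by $\sup_{z}d_Y(\partial A(z),\partial A(0))$ and substituting into the previous display delivers the stated estimate.

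The hard part will be precisely this last reconciliation: for a fixed small $\lambda$ the quantity $\bigl\|(\partial A(z))_{1/\lambda}-(\partial A(0))_{1/\lambda}\bigr\|$ is just one member of the sequence defining the $\limsup$, and an individual member need not be dominated by its $\limsup$. This is where the qualifier ``for small enough $\lambda$'' must do the work: one chooses $\lambda$ small (so $1/\lambda$ is large) so that these fixed-parameter differences fall into the tail controlled by $d_Y$, \emph{uniformly in} $z$ along the trajectory, the uniform finiteness being guaranteed by Assumption~\ref{M-assump124}\eqref{AsptionA23}. Making this uniformity rigorous — that the finite bound $\varepsilon=\sup_{z}d_Y(\partial A(z),\partial A(0))$ simultaneously controls the differences for all $z=z_\tau$, $\tau\in[0,1]$ — is the only delicate point; the remainder is the mechanical combination of Lemma~\ref{M-new}, estimate \eqref{m v}, and the resolvent differentiation above.
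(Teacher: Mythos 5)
Your proposal is essentially the paper's own proof: Lemma~\ref{M-new} applied with coefficients $dA^\lambda(S_\lambda(\cdot)x)$ and $dA^\lambda(S_\lambda(\cdot)0)=dA^\lambda(0)$, the growth bound \eqref{m v} supplying $M=1$ and the factor $e^{2\mu}$, and then the identification (via Kato's lemmas) of the coefficient difference with a difference of Yosida approximations of the proto-derivatives, bounded by $\sup_z d_Y(\partial A(z),\partial A(0))$. The ``delicate point'' you flag --- that for a fixed small $\lambda$ the quantity $\bigl\|(\partial A(z))_{1/\lambda}-(\partial A(0))_{1/\lambda}\bigr\|$ is a single member of the family whose $\limsup$ defines $d_Y$, and so is not automatically dominated by it --- is exactly the step the paper performs silently (the second inequality in \eqref{M-4.47}, which passes from a fixed-$\lambda$ difference to $\limsup_{\lambda\downarrow 0}$ without justification), so your account omits nothing that the paper actually supplies and is, if anything, more explicit about where the argument is fragile.
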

\begin{proof}
Applying the Lemma \ref{M-new} to 
\eqref{TM-Llamw}
where
$$
A(t) := d A^{\lambda} (S_{\lambda} (t)x),
\quad
B(t) := d A^{\lambda} (S_{\lambda} (t)0) = d A^{\lambda} (0)
$$
and
taking into account \eqref{m v}, we arrive at
\begin{align}
\left \| v^x_\lambda (t) -v^0_\lambda (t)\right \| & \le e^{2\mu} \| w\| \sup_{0\le t\le 1} \left \| dA^\lambda (S_\lambda (t)x) - dA^\lambda (S_\lambda (t)0) \right \| 
\notag\\
& \le e^{2\mu} \| w\|
\sup_{0\le t\le 1} 
\limsup_{\lambda \downarrow 0}
\left \|dA^\lambda (S_\lambda (t)x) - dA^\lambda (S_\lambda (t)0) \right \| 
\notag\\
& \le  e^{2\mu} \| w\|
\sup_{0\le t\le 1} d_Y(\partial A (S (t)x) ,\partial A (S (t)0) )
\notag\\
& \le  e^{2\mu} \| w\|
\sup_{z \in \mathbb{X}} d_Y(\partial A (z) ,\partial A (0) ). \label{M-4.47}
\end{align}
This finishes the proof.
\end{proof}

\begin{corollary}\label{M-Cophixy3omega}
Let $0$ be the stationary solution of \eqref{nonlinear eq} and let the Assumption~\ref{M-assump124}
be made. Let us denote by $(S(t))_{t \ge 0}$ and $(T(t))_{t \ge 0}$ the $C_0$-semigroups  generated by $A$ and $\partial A(0)$, respectively.
Then the following statements are true
\begin{equation}\label{M-phixphiy}
\|\phi (x) - \phi (y)\|
\le 
e^{3\omega} \| x - y\|
\sup_{z \in \mathbb{X}}
d_Y(\partial A(z),\partial A (0)),
\end{equation}
for all $x,\, y \in B_{r_0} (0)$ and $r_0$ is sufficiently small positive real number.
\end{corollary}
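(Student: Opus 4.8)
The plan is to reduce the estimate for $\phi(x)-\phi(y)=[S(1)x-T(1)x]-[S(1)y-T(1)y]$ to the pointwise bound already established in Corollary~\ref{M-ss2d}, using the integral mean-value representation \eqref{Kato-f3.4} of the G\^{a}teaux derivative to pass from the difference of the nonlinear flows to the difference of their linearizations along the segment joining $x$ and $y$.

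First I would work with the regularized flow $S_\lambda$ and evaluate \eqref{Kato-f3.4} at $t=1$, which gives
\[
S_\lambda(1)x - S_\lambda(1)y = \int_0^1 v_\lambda^{z_\theta}(1)\, d\theta, \qquad z_\theta := \theta x + (1-\theta) y,
\]
where $v_\lambda^{z_\theta}$ solves the linearized problem \eqref{TM-Llamw} with datum $w=x-y$ along the trajectory through $z_\theta$. Since $B_{r_0}(0)$ is convex, every base point $z_\theta$ remains in $B_{r_0}(0)$, so Corollary~\ref{M-ss2d} applies uniformly in $\theta$.

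Next I would account for the linear term. As $0$ is an equilibrium and $\partial A(0)$ generates $T(t)$, the derivative of the nonlinear flow at $0$ coincides with the linear semigroup, so $v_\lambda^0(1)\to T(1)(x-y)$ as $\lambda\downarrow 0$, while $T(1)x-T(1)y=T(1)(x-y)=\int_0^1 v_\lambda^0(1)\,d\theta$ (the integrand being independent of $\theta$). Subtracting and letting $\lambda\downarrow 0$, with $S_\lambda(1)\to S(1)$, I would obtain
\[
\phi(x) - \phi(y) = \lim_{\lambda \downarrow 0} \int_0^1 \bigl( v_\lambda^{z_\theta}(1) - v_\lambda^0(1) \bigr)\, d\theta .
\]
Applying Corollary~\ref{M-ss2d} with $w=x-y$ bounds the integrand by $e^{2\mu}\|x-y\|\sup_{z\in\mathbb{X}} d_Y(\partial A(z),\partial A(0))$, with $\mu=\omega/(1-\lambda\omega)$; integrating in $\theta$ and letting $\lambda\downarrow 0$, so that $\mu\downarrow\omega$ and $e^{2\mu}\le e^{3\omega}$ for small $\lambda$ (one may assume $\omega>0$ after replacing $\omega$ by a larger constant, since $\omega I-A$ stays m-accretive), yields \eqref{M-phixphiy}.

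The hard part will be justifying the passage to the limit $\lambda\downarrow 0$: one must confirm that the mean-value identity \eqref{Kato-f3.4} and the bound of Corollary~\ref{M-ss2d} are stable under this limit, namely that $S_\lambda(1)\to S(1)$ pointwise, that $v_\lambda^0(1)\to T(1)(x-y)$ through the identification $d\mathcal{S}(0)=T$, and that the unboundedness of the operators $\partial A(z)$ creates no obstruction, since every quantity is ultimately expressed through the finite Yosida distances $d_Y(\partial A(z),\partial A(0))$. The uniformity in $\theta$ of the bound from Corollary~\ref{M-ss2d} makes the interchange of the limit and the integral routine once these convergences are in place.
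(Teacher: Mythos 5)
Your proposal follows essentially the same route as the paper's proof: both rest on Kato's mean-value identity \eqref{Kato-f3.4} to express the difference of the flows as an integral of linearized solutions $v_\lambda^{z_\theta}$ along the segment joining $x$ and $y$, then invoke Corollary~\ref{M-ss2d} uniformly in $\theta$ (with $w = x-y$) and let $\lambda \downarrow 0$ so that $e^{2\mu} \to e^{2\omega} \le e^{3\omega}$. The only cosmetic difference is that you treat the linear term by the linearity of $T(t)$ directly, whereas the paper applies the mean-value formula to $\mathcal{T}_\lambda$ as well --- which, for a linear semigroup, amounts to the same thing.
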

\begin{proof}
Set
\begin{align}
\phi(t)x 
& = S(t)x - T(t)x,
\label{TM-4}
\\
\phi_\lambda  (x) 
& = \mathcal{S}_\lambda (x) - \mathcal{T}_\lambda  (x),
\label{TM-5}
\end{align}
where
\begin{align*}
& 
(\mathcal{S}_\lambda x)(t) = S_\lambda  (t)x,
\quad \mbox{ for all } t \in [0,1],
\\
&
(\mathcal{T}_\lambda x)(t) = T_\lambda  (t)x, 
\quad \mbox{ for all } t \in [0,1].
\end{align*}

In order to prove that $(S(t))_{t \ge 0}$ and $(T(t))_{t \ge 0}$ are $\varepsilon_0$-close, it suffices to show that $\mathrm{Lip}(\phi_\lambda) \le \varepsilon$, where $\varepsilon = \varepsilon (\varepsilon_0)$ is positive and independent of $x$, $y$, i.e. the following estimate
\begin{equation}\label{TM-6}
\|\phi_\lambda  (x) - \phi_\lambda (y)\|
\le
\varepsilon
\|x-y\|,
\quad \mbox{ for all } x,\,y \in \mathbb{X}
\end{equation}
holds. Then, by letting $\lambda \downarrow 0$, for fixed $t$, we have
\begin{equation}\label{TM-7}
\|\phi (t) x - \phi (t) (y)\|
\le
\varepsilon
\|x-y\|,
\quad \mbox{ for all } x,\,y \in \mathbb{X},
\quad t \in [0, 1].
\end{equation}

Our task is now to prove \eqref{TM-6}. By Kato~\cite[Lemma~3.4]{kat}, we have
\begin{align}
& \|\phi_\lambda (x) - \phi_\lambda (y)\|
\notag \\
& = 
\left \|
	\mathcal{S}_{\lambda} (x)
	-
	\mathcal{S}_{\lambda} (y)
	-
	\mathcal{T}_{\lambda} (x)
	+
	\mathcal{T}_{\lambda} (y)
\right \|
\notag \\
& \le
\int_{0}^{1}
\left \|
	d
	\mathcal{S}_{\lambda}
	(\theta x + (1-\theta)y)(x-y)
	-
	d
	\mathcal{T}_{\lambda}
	(\theta x + (1-\theta)y)(x-y)
\right \|
d\theta.
\label{TM-8}
\end{align}
Let $\eta = \theta x+(1- \theta )y$. Then 
\begin{equation}
v_{\lambda}^{\eta } (t) = [d\mathcal{S}_{\lambda} (\eta)(x-y)](t)
\end{equation}
and 
\begin{equation}
v_{\lambda}^{0} (t):=v_{\lambda}(t) = [d\mathcal{T}_{\lambda} (\eta)(x-y)](t)
\end{equation}
are solutions to \eqref{TM-Elam} with the operators are $d A_{\lambda} (S_\lambda (t)\eta)$ and $dA_{\lambda} (0)$, respectively, and $w = x - y$. 
By Corollary~\ref{M-ss2d}, for $t \in [0,1]$, we have
\begin{align}
&
\left \|
	d
	\mathcal{S}_{\lambda}
	(\theta x + (1-\theta)y)(x-y)(t)
	-
	d
	\mathcal{T}_{\lambda}
	(\theta x + (1-\theta)y)(x-y)(t)
\right \|
\notag\\
& =
\left \|
	v_{\lambda}^{\eta} (t)
	-
	v_{\lambda} (t)
\right \|
\notag\\
& \le
e^{2\mu} \| x - y\|
\sup_{z\in \mathbb{X}}
d_Y(\partial A(z),\partial A (0)),
\end{align}
where $\mu =\omega /(1-\lambda \omega )$,
$\omega$ is a fixed positive number that makes $\omega I-A$ m-accretive. 
Therefore, 
\begin{align*}
&
\left \|
	d
	\mathcal{S}_{\lambda}
	(\theta x + (1-\theta)y)(x-y)
	-
	d
	\mathcal{T}_{\lambda}
	(\theta x + (1-\theta)y)(x-y)
\right \|
\\
& =
\sup_{t \in [0,1]}
\left \|
	d
	\mathcal{S}_{\lambda}
	(\theta x + (1-\theta)y)(x-y)(t)
	-
	d
	\mathcal{T}_{\lambda}
	(\theta x + (1-\theta)y)(x-y)(t)
\right \|
\\
& \le 
e^{2\mu} \| x - y\|
\sup_{z \in \mathbb{X}}
d_Y(\partial A(z),\partial A (0)).
\end{align*}
Next, we have
\begin{align*}
\|\phi_\lambda  (x) - \phi_\lambda (y)\|
& \le 
\int_{0}^{1}
e^{2\mu} \| x - y\|
\sup_{z \in \mathbb{X}}
d_Y(\partial A(z),\partial A (0))
d\theta
\\
& =
e^{2\mu} \| x - y\|
\sup_{z \in \mathbb{X}}
d_Y(\partial A(z),\partial A (0)).
\end{align*}
Letting $\lambda \downarrow 0$, we arrive at
\begin{equation}
\|\phi (x) - \phi (y)\|
\le
e^{3\omega} \| x - y\|
\sup_{z \in \mathbb{X}}
d_Y(\partial A(z),\partial A (0)).
\end{equation}
This completes the proof of \eqref{M-phixphiy}.
\end{proof}

\begin{definition}[Lipschitz invariant manifold]
Let $(S(t))_{t\ge 0}$ be a semigroup of (possibly nonlinear) operators on the Banach space $\mathbb{X}$. A set $\mathcal{M}\subset \mathbb{X}$ is said to be a {\it Lipschitz invariant manifold} for semigroup $(S(t))_{t\ge 0}$ if the phase space $\mathbb{X}$ is split into a direct sum $\mathbb{X}=\mathbb{X}^1\oplus \mathbb{X}^2$, where $\mathbb{X}^1$ and $\mathbb{X}^2$ are closed subspaces of $\mathbb{X}$, and there exists a Lipschitz continuous mapping $\Phi \colon \mathbb{X}^1\to\mathbb{X}^2$ so that
$\mathcal{M}=\mathrm{graph}(\Phi)$ and $S(t)\mathcal{M}\subset \mathcal{M}$ for $t \ge 0$.
\end{definition}

For brevity, a Lipschitz invariant manifold will be called simply invariant manifold if this does not cause any confusion.

\begin{definition}[{$\varepsilon$-close (see Minh-Wu \cite{minwu})}]
Two semigroups $(S(t))_{t \ge 0}$ and $(T(t))_{t \ge 0}$ on a Banach space $\mathbb{X}$ are said to be 
\textit{$\varepsilon$-close} 
if there exist a positive constant $\varepsilon$ such that
\begin{equation}\label{TM-3}
\|\phi(t)x - \phi(t)y\|
\le
\varepsilon
\|x-y\|,
\quad \mbox{ for all } t \in [0,1],
\quad 
x,\, y \in \mathbb{X},
\end{equation}
where
\begin{equation}\label{Q-phi}
\phi(t)x := S(t)x - T(t)x,
\quad 
\mbox{ for all }
x \in \mathbb{X}.
\end{equation}
\end{definition}

\begin{theorem}[Unstable invariant manifold]\label{TM-Theorem2.2}
Under Assumption~\ref{M-assump124}, let $0 \in \mathbb{X}$ be a stationary solution of Eq. \eqref{nonlinear eq}. Moreover, assume that the strongly continuous semigroup $(T(t))_{t \ge 0}$ has an exponential dichotomy with projection $P$. Then, there exists a positive constant $\varepsilon_0$, such that if $0 < \varepsilon < \varepsilon_0$, Eq. \eqref{nonlinear eq} has a unique invariant manifold $\mathcal{W}^{\rm u} \subset  \mathbb{X}$, presented as graph of a Lipschitz continuous mapping $\Phi \colon \mathrm{Ker} (P) \rightarrow \mathrm{Im} (P)$. Moreover, $\lim_{\varepsilon_0 \rightarrow 0} \mathrm{Lip}(\Phi) = 0$.
\end{theorem}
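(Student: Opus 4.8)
The plan is to realize $\mathcal{W}^{\mathrm u}$ via the classical Lyapunov--Perron method, feeding it the estimates already assembled. First I would record the two facts that drive everything: since $A(0)=0$, the equilibrium is fixed by the flow, so $S(t)0=0=T(t)0$ and hence $\phi(t)0=0$ for all $t$; and by Corollary~\ref{M-Cophixy3omega} the nonlinear semigroup $(S(t))_{t\ge0}$ and the linear semigroup $(T(t))_{t\ge0}$ are $\varepsilon$-close in the sense of the definition above, with the Lipschitz constant of $\phi(t)$ controlled by $e^{3\omega}\sup_{z}d_Y(\partial A(z),\partial A(0))$. Thus the single parameter $\varepsilon=\sup_{z}d_Y(\partial A(z),\partial A(0))$ governs how far $S(t)$ is from the hyperbolic model, and choosing $\varepsilon_0$ small forces this Lipschitz constant to be small. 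Since Corollary~\ref{M-Cophixy3omega} only delivers the estimate on $B_{r_0}(0)$, the first genuine step is to replace $\phi$ by a globally defined, globally Lipschitz perturbation with small Lipschitz constant that agrees with $\phi$ on a smaller ball (a standard radial cut-off), so that the manifold produced for the modified semigroup coincides near $0$ with the one we want; this is what makes the resulting manifold \emph{local}.

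Next I would pass to the time-one map and work discretely. Writing $S:=S(1)$, $T:=T(1)$, Lemma~\ref{Lemhyp} and the hypothesis give that $T$ is a hyperbolic linear isomorphism on the splitting $\mathbb{X}=\mathrm{Im}(P)\oplus\mathrm{Ker}(P)$, contracting on $\mathrm{Im}(P)$ and expanding on $\mathrm{Ker}(P)$, while $S=T+g$ with $g:=\phi(1)$ satisfying $g(0)=0$ and $\mathrm{Lip}(g)$ small. For the unstable manifold I would characterize $\mathcal{W}^{\mathrm u}$ as the set of points $x$ admitting a full backward orbit $(x_n)_{n\le 0}$ of $S$ with $x_0=x$ that decays geometrically as $n\to-\infty$ at some rate $e^{\gamma n}$ with $0<\gamma<\beta$. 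Projecting this orbit onto the two subspaces and inverting $T$ on $\mathrm{Ker}(P)$ turns the invariance relation $x_{n+1}=Sx_n$ into a Lyapunov--Perron summation equation expressing the whole orbit in terms of its unstable component $\mathrm{Ker}(P)\ni\xi$; the value $\Phi(\xi)\in\mathrm{Im}(P)$ is then read off as the stable component of $x_0$.

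The core of the argument is a contraction mapping: on the Banach space of geometrically weighted backward sequences I would define the Lyapunov--Perron operator whose fixed point is the decaying backward orbit, and show it is a uniform contraction precisely when $\mathrm{Lip}(g)$ is small compared with the dichotomy gap, i.e. for $\varepsilon<\varepsilon_0$. The Banach fixed point theorem then produces, for each $\xi$, a unique decaying orbit depending Lipschitz-continuously on $\xi$, which defines $\Phi$ and hence $\mathcal{W}^{\mathrm u}=\mathrm{graph}(\Phi)$; forward invariance $S(t)\mathcal{W}^{\mathrm u}\subset\mathcal{W}^{\mathrm u}$ follows because shifting a decaying backward orbit along the flow again yields such an orbit, and the same uniqueness shows that no two distinct invariant graphs can coexist. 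Tracking the constants in the contraction estimate shows $\mathrm{Lip}(\Phi)$ is bounded by a quantity that tends to $0$ with $\mathrm{Lip}(g)$, giving $\lim_{\varepsilon_0\to0}\mathrm{Lip}(\Phi)=0$.

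I expect the main obstacle to be the handling of backward time for a mere semigroup: $(S(t))_{t\ge0}$ is not invertible, so the backward orbits defining $\mathcal{W}^{\mathrm u}$ must be \emph{constructed} rather than simply applied, and one must verify that the expanding direction of the linear model survives the nonlinear perturbation well enough that the unstable components can be solved backward. This is exactly where the smallness of $\mathrm{Lip}(g)$ relative to the dichotomy constants $N,\beta$ is essential. A secondary technical point is reconciling the discrete construction at the time-one map with the continuous statement $S(t)\mathcal{W}^{\mathrm u}\subset\mathcal{W}^{\mathrm u}$ for all $t\ge0$: the manifold built for $S(1)$ must be shown to be invariant for the whole family $(S(t))_{t\ge0}$, which I would obtain from the uniqueness of the fixed point together with the semigroup law, but this compatibility must be checked rather than assumed.
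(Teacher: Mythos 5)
Your strategy is, in substance, the paper's: its entire proof consists of (a) the $\varepsilon$-closeness of $(S(t))_{t\ge 0}$ and $(T(t))_{t\ge 0}$ supplied by Corollary~\ref{M-Cophixy3omega}, and (b) the Lyapunov--Perron/graph-transform machinery for a hyperbolic time-one map with a small Lipschitz perturbation, which the paper does not redo but imports as Lemmas 2.11--2.13 of Minh--Wu \cite{minwu}. Your discrete construction (weighted backward-sequence space, contraction when $\mathrm{Lip}(g)$ is small relative to the dichotomy constants $N,\beta$, invariance for all $t\ge 0$ via uniqueness plus the semigroup law, and $\mathrm{Lip}(\Phi)\to 0$) is exactly the content of those cited lemmas, so on that part you are reconstructing what the paper merely quotes.

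The genuine flaw is your truncation step. Theorem~\ref{TM-Theorem2.2} is the \emph{global} statement: under Assumption~\ref{M-assump124} the smallness parameter is $\varepsilon=\sup_{x\in D(A)}d_Y(\partial A(x),\partial A(0))$, a bound over the whole domain, and the conclusion is a graph $\Phi\colon \mathrm{Ker}(P)\to\mathrm{Im}(P)$ defined on all of $\mathrm{Ker}(P)$, globally invariant and globally unique. Correspondingly, the proof of Corollary~\ref{M-Cophixy3omega} actually establishes $\|\phi(t)x-\phi(t)y\|\le e^{3\omega}\varepsilon\|x-y\|$ for \emph{all} $x,y\in\mathbb{X}$ (see \eqref{TM-6}--\eqref{TM-7} there); the restriction to $B_{r_0}(0)$ in its statement is an artifact needed only later, for the local theorems under Assumption~\ref{M-assump123}. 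By inserting the radial cut-off you change what is being proved: you obtain an invariant manifold for the \emph{modified} semiflow, hence only a locally invariant set for Eq.~\eqref{nonlinear eq} and no global uniqueness --- that is the content of the later local Theorem~\ref{TM-Thm-USlocal}, not of the present theorem. Moreover the truncated family $T(t)+\phi_0(t)$ no longer satisfies the semigroup law, so your own mechanism for upgrading invariance of the time-one map to invariance under all $S(t)$ (uniqueness plus the semigroup property) is no longer available without additional argument. The repair is simple: drop the cut-off, invoke the global form of the $\varepsilon$-closeness estimate, and run your Lyapunov--Perron argument on all of $\mathbb{X}$; with that change your proof coincides with the paper's.
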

\begin{proof}
The theorem is an immediate consequence of Minh-Wu \cite[Lemmas 2.11, 2.12, 2.13]{minwu} and Corollary~\ref{M-Cophixy3omega}.
\end{proof}

\begin{theorem}[Stable invariant manifold]\label{TM-Theorem2.3}
With the assumptions in Theorem~\ref{TM-Theorem2.2}, the set
\begin{equation}\label{TM-10}
\mathcal{W}^{\rm s}
:=
\left \{
	x \in \mathbb{X}
	:
	\lim_{t \rightarrow +\infty}
	S(t)x = 0
\right \}
\end{equation}
is a stable invariant manifold of Eq. \eqref{nonlinear eq}, represented by the graph of a Lipschitz continuous mapping $\Psi \colon \mathrm{Im} (P) \rightarrow \mathrm{Ker} (P)$, i.e. $\mathcal{W}^{\rm s} = \mathrm{graph}(\Psi)$ and $S(t)\mathcal{W}^{\rm s} \subset \mathcal{W}^{\rm s}$, for all $t \ge 0$.
\end{theorem}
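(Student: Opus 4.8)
The plan is to produce the map $\Psi$ by a Lyapunov--Perron fixed-point argument and then to verify, in two inclusions, that its graph is exactly the decay set $\mathcal{W}^{\rm s}$. Throughout I work in the local ball $B_{r_0}(0)$ on which Corollary~\ref{M-Cophixy3omega} guarantees that $(S(t))_{t\ge0}$ and $(T(t))_{t\ge0}$ are $\varepsilon$-close with constant at most $e^{3\omega}\sup_{z}d_Y(\partial A(z),\partial A(0))$, a quantity that the hypothesis $0<\varepsilon<\varepsilon_0$ lets me take as small as needed. After the standard modification of $A$ outside $B_{r_0}(0)$ by a smooth cutoff I may assume this closeness is global, a device I will undo at the end to recover the local statement. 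Recall that under the dichotomy of $(T(t))_{t\ge0}$ the subspace $\mathrm{Im}(P)$ is the stable one (forward decay at rate $e^{-\beta t}$) while $\mathrm{Ker}(P)$ is unstable, so that for the linear equation $\mathcal{W}^{\rm s}$ is precisely $\mathrm{Im}(P)=\mathrm{graph}(0)$; I therefore expect $\Psi$ to be a small Lipschitz perturbation of the zero map.

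Next I would fix $\xi\in\mathrm{Im}(P)$ and look for a mild solution $u(\cdot)$ of Eq.~\eqref{nonlinear eq} on $[0,\infty)$ with $Pu(0)=\xi$ lying in the Banach space $C_\gamma:=\{u\in C([0,\infty),\mathbb{X}):\sup_{t\ge0}e^{\gamma t}\|u(t)\|<\infty\}$ for a fixed $0<\gamma<\beta$. Writing the dynamics of $(S(t))_{t\ge0}$ as that of $(T(t))_{t\ge0}$ plus the $\varepsilon$-small discrepancy $\phi$ from \eqref{Q-phi}, any such bounded orbit is forced to satisfy the hyperbolic variation-of-constants equation in which the stable part is propagated forward from $\xi$ and the unstable part is propagated backward from $+\infty$, so that no growing mode survives. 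The dichotomy estimates turn the associated integral operator into a contraction on $C_\gamma$ once $\varepsilon<\varepsilon_0$ is small, yielding a unique fixed point $u(\cdot;\xi)$, and I set $\Psi(\xi):=(I-P)u(0;\xi)\in\mathrm{Ker}(P)$. Lipschitz dependence of the fixed point on $\xi$, with constant of order $\varepsilon$, makes $\Psi$ Lipschitz with $\mathrm{Lip}(\Psi)\to0$ as $\varepsilon_0\to0$; forward invariance $S(t)\mathcal{W}^{\rm s}\subset\mathcal{W}^{\rm s}$ is immediate from the semigroup law and the very definition of $\mathcal{W}^{\rm s}$. This existence-and-Lipschitz part may equally be quoted from the Minh--Wu lemmas \cite{minwu} applied to the $\varepsilon$-close pair, exactly as in the proof of Theorem~\ref{TM-Theorem2.2}.

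It remains to identify $\mathrm{graph}(\Psi)$ with $\mathcal{W}^{\rm s}$. The inclusion $\mathrm{graph}(\Psi)\subset\mathcal{W}^{\rm s}$ is clear, since each orbit $u(\cdot;\xi)$ lies in $C_\gamma$ and hence $S(t)(\xi+\Psi(\xi))\to0$. The reverse inclusion is the step I expect to be the main obstacle: given $x$ with $S(t)x\to0$ I must show $(I-P)x=\Psi(Px)$. The difficulty is that the definition supplies only convergence to $0$, not membership in $C_\gamma$, so I would first use the exponential dichotomy to upgrade mere decay to exponential decay: a forward orbit that stays in $B_{r_0}(0)$ and tends to $0$ cannot retain an asymptotically nonzero unstable component, and the dichotomy gap then forces the whole orbit to decay at a rate close to $e^{-\beta t}$, placing $t\mapsto S(t)x$ in $C_\gamma$. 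Once $S(\cdot)x\in C_\gamma$, it is a bounded solution with stable datum $Px$, so by uniqueness of the fixed point it coincides with $u(\cdot;Px)$; evaluating at $t=0$ gives $(I-P)x=\Psi(Px)$, that is, $x\in\mathrm{graph}(\Psi)$. Finally I would remove the cutoff to confine all assertions to the neighborhood $B_{r_0}(0)$, which completes the proof.
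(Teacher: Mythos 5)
Your core construction is the right machinery, and it is in fact exactly what the paper invokes: the paper's entire proof of Theorem~\ref{TM-Theorem2.3} is a citation of Minh--Wu \cite[Theorem~2.16]{minwu} together with Corollary~\ref{M-Cophixy3omega}, and your Lyapunov--Perron fixed point in $C_\gamma$, the Lipschitz dependence of the fixed point on $\xi$, and the two-inclusion identification of the decay set (including the key step of upgrading plain convergence $S(t)x\to 0$ to exponential decay via the dichotomy gap) constitute the standard proof of that cited theorem. So, modulo packaging, you and the paper use the same two ingredients: $\varepsilon$-closeness of $(S(t))_{t\ge0}$ and $(T(t))_{t\ge0}$, plus hyperbolicity of the linear semigroup.

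The genuine problem is your cutoff. Theorem~\ref{TM-Theorem2.3} is a \emph{global} statement: $\mathcal{W}^{\rm s}$ in \eqref{TM-10} consists of \emph{all} $x\in\mathbb{X}$ with $S(t)x\to0$, and $\Psi$ is defined on all of $\mathrm{Im}(P)$. Your plan --- modify $A$ outside $B_{r_0}(0)$ so that the closeness becomes global, then ``undo the cutoff at the end to recover the local statement'' --- proves the local Theorem~\ref{TM-Thm-SMlocal}, not Theorem~\ref{TM-Theorem2.3}. After truncation the semigroup, and hence its decay set, belongs to a different equation; removing the cutoff only permits conclusions about orbits that never leave $B_{r_0}(0)$, so a point $x$ outside that ball whose orbit decays to $0$ (such points belong to $\mathcal{W}^{\rm s}$ as defined) is never addressed, and your $\Psi$ is pinned down only on a small ball of $\mathrm{Im}(P)$. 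The repair is that no cutoff is needed here: the hypothesis of Theorem~\ref{TM-Theorem2.2} is Assumption~\ref{M-assump124}, whose item (A2.3) makes $\varepsilon=\sup_{z\in D(A)}d_Y(\partial A(z),\partial A(0))$ small \emph{globally}, and the proof of Corollary~\ref{M-Cophixy3omega} in fact yields the Lipschitz bound $\|\phi(t)x-\phi(t)y\|\le e^{3\omega}\,\varepsilon\,\|x-y\|$ for all $x,\,y\in\mathbb{X}$, as in \eqref{TM-7}; the restriction to $B_{r_0}(0)$ appears only in the corollary's statement, not in its derivation. With this global closeness, run your fixed-point argument as is: an orbit tending to $0$ eventually enters a small ball, decays exponentially from then on, hence lies in $C_\gamma$ after adjusting constants, and uniqueness of the fixed point with stable datum $Px$ gives $(I-P)x=\Psi(Px)$. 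That closes the argument for the theorem actually stated; the truncation device belongs to the proofs of the local Theorems~\ref{TM-Thm-USlocal} and~\ref{TM-Thm-SMlocal}, where the paper indeed uses it.
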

\begin{proof}
The theorem is an immediate consequence of Minh-Wu \cite[Theorem~2.16]{minwu} and Corollary~\ref{M-Cophixy3omega}.
\end{proof}

Below we present local versions of Theorems \ref{TM-Theorem2.2} and \ref{TM-Theorem2.3}. Recall that $B_r (0, \mathbb{Y})$ stands for the ball of radius $r$ centered at $0$ of $\mathbb{Y}$. Because we use $\mathbb{X}$ as the fixed phase space for Eq. \eqref{nonlinear eq}, for brevity, we denote $B_r (0, \mathbb{X})$ by $B_r (0)$ if this does not cause any confusion.

\begin{assumption}\label{M-assump123}
\
\begin{enumerate}[\quad \rm ({A3.}1)]
\item\label{AsptionA31} There exists a real number $\omega$ such that $\omega I -A$ is m-accretive;
\item\label{AsptionA32}  There exists a positive $r>0$ such that $A$ is proto-differentiable at every point $x\in B_r(0)\cap D(A)$ and
the proto-derivative $\partial A(x)$ is a single-valued linear operator in $\mathbb{X}$ such that $\omega I-A$ is m-accretive.
\item\label{AsptionA33}
Yosida distance $d_{Y}(\partial A(x),\partial A(0))$ satisfies
$d_Y(\partial A(x), \partial A(0))<\infty$ for all $x\in B_r(0)$ and it satisfies
\begin{equation}\label{AsptionA33dY}
\lim_{x\to 0} d_{Y}(\partial A(x),\partial A(0))=0.
\end{equation}
\end{enumerate}
\end{assumption}

This assumption yields that the operator $A$ is proto-differentiable in a neighborhood of $0$ and the proto-derivative $\partial A(\cdot)$ is continuous at $0$ in the Yosida distance's sense.

\begin{definition}[Local Lipschitz invariant manifold]
Let $(S(t))_{t\ge 0}$ be a semigroup of (possibly nonlinear) operators on the Banach space $\mathbb{X}$. A set $\mathcal{N}\subset \mathbb{X}$ is said to be a {\it local Lipschitz invariant manifold} for semigroup $(S(t))_{t\ge 0}$ around an equilibrium $0$ if 
$\mathbb{X}$ is split into a direct sum $\mathbb{X}=\mathbb{X}^1\oplus \mathbb{X}^2$, where $\mathbb{X}^1$ and $\mathbb{X}^2$ are closed subspaces of $\mathbb{X}$, and there exists a Lipschitz continuous mapping $\Phi \colon B_{r}(0, \mathbb{X}^1)\to \mathbb{X}^2$ and an open neighborhood $U$ of $0$ such that $\mathcal{N} \cap U = \mathrm{graph}(\Phi)$
and for each $t \ge 0$, $S(t)(\mathrm{graph}(\Phi)) \cap U \subset \mathrm{graph}(\Phi)$.
\end{definition}

\begin{theorem}[Local unstable manifold]\label{TM-Thm-USlocal}
Under the Assumption~\ref{M-assump123}, let $0 \in \mathbb{X}$ be a stationary solution of Eq. \eqref{nonlinear eq}. Moreover, assume that the strongly continuous semigroup $(T(t))_{t \ge 0}$ has an exponential dichotomy with projection $P$. Then, there exists a neighborhood $U$ of $0\in\mathbb{X}$,
such that Eq. \eqref{nonlinear eq} has a unique local invariant manifold $\mathcal{W}_{\rm loc}^{\rm u} \subset \mathbb{X}$, presented as
$\mathrm{graph}(\Phi) \cap U$, where $\Phi \colon \mathrm{Ker}(P) \rightarrow \mathrm{Im}(P)$ is a Lipschitz continuous mapping.
\end{theorem}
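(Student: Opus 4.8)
The plan is to reduce the local statement to the global one, Theorem~\ref{TM-Theorem2.2}, by truncating $A$ away from the equilibrium so that the purely local hypothesis of Assumption~\ref{M-assump123} is promoted to the global Assumption~\ref{M-assump124} with a constant lying below the threshold of Theorem~\ref{TM-Theorem2.2}. The point is that the global theorem needs the uniform smallness $\sup_{x} d_Y(\partial A(x),\partial A(0))<\varepsilon_0$, whereas \eqref{AsptionA33dY} only furnishes smallness of $d_Y(\partial A(z),\partial A(0))$ near $0$; a cut-off repairs exactly this gap while leaving the equation untouched on a neighborhood of the equilibrium, so that the global manifold of the modified problem restricts to the desired local one.

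Concretely, I would first fix the constant $\varepsilon_0$ supplied by Theorem~\ref{TM-Theorem2.2} for the given exponential dichotomy of $(T(t))_{t\ge 0}$. By the continuity condition \eqref{AsptionA33dY} there is a radius $\rho\in(0,r]$ with $d_Y(\partial A(z),\partial A(0))<\varepsilon_0$ for all $z\in B_\rho(0)$. I would then take a globally Lipschitz cut-off $\chi\colon\mathbb{X}\to[0,1]$ with $\chi\equiv 1$ on $B_{\rho/2}(0)$ and $\chi\equiv 0$ off $B_\rho(0)$. Rather than subtracting the unbounded operators $A$ and $\partial A(0)$ on their (possibly non-nested) domains, I would perform the truncation at the level of the everywhere-defined Lipschitz Yosida approximations $A^\lambda$ already used throughout the paper, setting
\[
\tilde A^\lambda u := dA^\lambda(0)u + \chi(u)\bigl(A^\lambda u - dA^\lambda(0)u\bigr),
\]
and then passing to the limit $\lambda\downarrow 0$ to obtain a limiting operator $\tilde A$ that coincides with $A$ on $B_{\rho/2}(0)$ and reduces to the linear generator $\partial A(0)$ outside $B_\rho(0)$.

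Next I would verify that $\tilde A$ satisfies Assumption~\ref{M-assump124}: that $\partial\tilde A(0)=\partial A(0)$, so that $(T(t))_{t\ge 0}$ and its dichotomy projection $P$ are unchanged; that $\tilde A$ is proto-differentiable everywhere; and, crucially, that $\sup_{z} d_Y(\partial\tilde A(z),\partial A(0))<\varepsilon_0$. The last bound should hold because on $B_\rho(0)$ the relevant Yosida distance is below $\varepsilon_0$ by the choice of $\rho$, outside $B_\rho(0)$ the operator equals $\partial A(0)$ and the distance vanishes, and in the transition annulus the extra contribution is governed by $\mathrm{Lip}(\chi)$ times the (small) size of the remainder there. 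Theorem~\ref{TM-Theorem2.2} then yields a global invariant manifold $\widetilde{\mathcal W}^{\mathrm u}=\mathrm{graph}(\widetilde\Phi)$ with $\widetilde\Phi\colon\mathrm{Ker}(P)\to\mathrm{Im}(P)$ Lipschitz. Choosing a neighborhood $U\subset B_{\rho/2}(0)$ of $0$, where the semiflows of $\tilde A$ and $A$ agree for trajectories that remain in $U$, I would set $\mathcal W^{\mathrm u}_{\mathrm{loc}}:=\widetilde{\mathcal W}^{\mathrm u}\cap U$ and let $\Phi$ be the corresponding restriction of $\widetilde\Phi$; local invariance transfers from $\tilde A$ to $A$ inside $U$, and local uniqueness follows from the global uniqueness for $\tilde A$ since any local invariant graph for $A$ in $U$ is also one for $\tilde A$.

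The main obstacle is the construction and verification of the truncation inside the m-accretive, proto-differentiable framework with non-nested domains that is the whole point of this paper, rather than in a classical Fr\'echet-smooth setting: multiplication by $\chi(u)$ can a priori destroy the m-accretivity of $\omega' I-\tilde A$ needed by Theorem~\ref{the cra} for well-posedness, and one must check that proto-differentiability survives. I expect to control this by exploiting that the smallness of $d_Y(\partial A(z),\partial A(0))$ on $B_\rho(0)$ forces the truncated remainder $\chi(u)(A^\lambda u-dA^\lambda(0)u)$ to be Lipschitz-small there, so that $\tilde A^\lambda$ remains accretive up to a controlled shift of $\omega$, the bound being uniform in $\lambda$ so as to survive the limit $\lambda\downarrow 0$. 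Should a clean m-accretive truncation prove elusive, I would instead localize directly, re-deriving the estimate of Corollary~\ref{M-Cophixy3omega} with $\sup_{z\in\mathbb{X}}$ replaced by $\sup_{z\in B_\delta(0)}$ for trajectories confined to a small ball, and then invoking the local versions of the Minh-Wu lemmas in \cite{minwu}.
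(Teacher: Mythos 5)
Your high-level architecture --- truncate so that the purely local smallness \eqref{AsptionA33dY} is promoted to the global smallness of Assumption~\ref{M-assump124}, invoke the global Theorem~\ref{TM-Theorem2.2}, then intersect with a small neighborhood --- is exactly the architecture of the paper's proof. The crucial difference is \emph{where} the truncation is performed. The paper never touches the generator: it radially truncates the semigroup difference $\phi(t)x = S(t)x - T(t)x$ of \eqref{Q-phi}, setting $\phi_0(t)x := \phi(t)x$ for $\|x\| \le r_0$ and $\phi_0(t)x := \phi(t)(r_0 x/\|x\|)$ otherwise; it then uses Webb's estimate $\mathrm{Lip}(\phi_0(t)) \le 2\,\mathrm{Lip}\bigl(\phi(t)\big|_{B_{r_0}(0)}\bigr)$ (see \cite{web85}), bounds the latter by $e^{3\omega}\sup_{\|z\|\le e^{\omega}r_0} d_Y(\partial A(z),\partial A(0))$ via a localized version of Corollary~\ref{M-Cophixy3omega}, applies Theorem~\ref{TM-Theorem2.2} to the pair $(T(t),\phi_0(t))$, and finally intersects the resulting global graph with $U = B_{r_0/2}(0,\mathbb{X}^2)\times B_{r_0/2}(0,\mathbb{X}^1)$. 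Since $\phi_0(t)$ is merely a globally Lipschitz family of maps, no question of m-accretivity, well-posedness, or proto-differentiability ever has to be re-examined. This is, in essence, the ``fallback'' you sketch in your last sentence; the fallback is the actual proof.

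Your primary route --- truncating at the level of the Yosida approximations, $\tilde A^\lambda u = dA^\lambda(0)u + \chi(u)\bigl(A^\lambda u - dA^\lambda(0)u\bigr)$, and passing to $\lambda \downarrow 0$ --- has a genuine gap precisely at the point you flag, and your proposed repair does not close it. First, the limiting object $\tilde A$ need not exist as an operator: outside $B_\rho(0)$ one has $\tilde A^\lambda u = (\partial A(0))^\lambda u$, which converges only for $u \in D(\partial A(0))$, and on the transition annulus no pointwise limit is available at all; so ``passing to the limit'' must mean convergence of the generated semigroups (Crandall--Liggett/Trotter--Kato), which presupposes a uniform accretivity bound you have not established. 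Second, the accretivity defect of $\tilde A^\lambda$ contains the term $(\chi(u)-\chi(v))\bigl(A^\lambda v - dA^\lambda(0)v\bigr)$, so you need $\mathrm{Lip}(\chi)\cdot \sup_{v}\|A^\lambda v - dA^\lambda(0)v\|$ over the annulus to be small \emph{uniformly in} $\lambda$. But $A^\lambda$ has Lipschitz constant of order $1/\lambda$, and $\|A^\lambda v\|$ is not bounded as $\lambda \downarrow 0$ for $v \notin D(A)$; the hypothesis \eqref{AsptionA33dY} controls only $\limsup_{\lambda \downarrow 0}\frac{1}{\lambda}\|J_\lambda^{\partial A(z)} - J_\lambda^{\partial A(0)}\|$ for each fixed $z$, and converting that into a bound on the remainder that is uniform over the annulus and in $\lambda$ --- let alone into \emph{Lipschitz}-smallness of the remainder --- requires an exchange of $\sup$ and $\limsup$ that you neither state nor justify. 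In short: keep the truncation idea, but apply it to $\phi(t)$ rather than to $A$; that is what makes the reduction to Theorem~\ref{TM-Theorem2.2} legitimate.
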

\begin{proof}
For the functions $\phi(\cdot)$ defined as in \eqref{Q-phi}, we consider the standard truncation procedure by defining
\begin{equation}
\phi_0(t)x:=
\begin{cases}
\phi(t)x & \mbox{ if }  \| x\| \le r_0,\\
\phi(t)(r_0x/\|x\|) & \mbox{ if } \| x\| >r_0.
\end{cases}
\end{equation}

It can be shown that $\phi_0(t)$ is Lipschitz continuous with Lipschitz coefficient
$\mathrm{Lip}(\phi_0(t)) = 2\mathrm{Lip}\left (\phi(t)\big|_{B_{r_0}(0)}\right )$
(see, for example,
Webb \cite[Proposition 3.10, p.95]{web85}).
Hence, by modifying \eqref{M-4.47} and \eqref{M-phixphiy}
we have
\begin{equation}\label{M-phixphiyloc}
\|\phi_0 (x) - \phi _0(y)\|
\le 
e^{3\omega} \| x - y\|
\sup_{\|z\| \le e^\omega r_0}
d_Y(\partial A(z),\partial A (0)),
\end{equation}
for all $x,\, y \in B_{r_0} (0)$ and $r_0$ is sufficiently small positive real number.
Next, we have
\begin{equation}
\mathrm{Lip}\left (\phi(t)\big|_{B_{r_0}(0)}\right )
\le 
e^{3\omega}
\sup_{\|z\| \le e^\omega r_0}
d_Y(\partial A(z),\partial A (0)).
\end{equation}
Since $d_Y(\partial A(z),\partial A (0)) \rightarrow 0$ as $r_0 \rightarrow 0$, hence $\mathrm{Lip}\left (\phi(t)\big|_{B_{r_0}(0)}\right ) \rightarrow 0$ as $r_0 \rightarrow 0$. Now, the assertions of Theorem \ref{TM-Theorem2.2} can be applied to $\phi_0(t)$. 
In fact, we choose $U = B_{r_0/2}(0, \mathbb{X}^2) \times B_{r_0/2}(0, \mathbb{X}^1) \subset B_{r_0} (0)$. By Theorem \ref{TM-Theorem2.2}, $\phi_0(t)$ has an invariant manifold $\mathcal{M}$, $\phi_0 (t)\mathcal{M} \subset \mathcal{M}$. Since $\mathcal{M} = \mathrm{graph}(\Phi)$, where $\Phi \colon \mathbb{X}^2 \rightarrow \mathbb{X}^1$, we have $\mathcal{N} := \mathrm{graph}\left (\Phi\big|_{B_{r_0/2}(0, \mathbb{X}^2)}\right ) \subset \mathcal{M}$.
This implies
$$
\phi_0 (t)\mathcal{N} \subset \phi_0 (t)\mathcal{M},
$$
that is,
$$
\phi_0 (t)\mathcal{N} \cap U \subset \phi_0 (t)\mathcal{M} \cap U \subset \mathcal{M} \cap U.
$$
Note that $\mathcal{M} \cap U = \mathrm{graph}\left (\Phi\big|_{B_{r_0/2}(0, \mathbb{X}^2)}\right )$, so
$$
\phi (t) \mathrm{graph}\left (\Phi\big|_{B_{r_0/2}(0, \mathbb{X}^2)}\right )
\subset 
\mathrm{graph}\left (\Phi\big|_{B_{r_0/2}(0, \mathbb{X}^2)}\right ).
$$
The proof is complete.
\end{proof}

\begin{theorem}[Local stable manifold]\label{TM-Thm-SMlocal}
Under the assumptions in Theorem~\ref{TM-Thm-USlocal}, there exists a neighborhood $U$ of $0\in\mathbb{X}$ such that the set
\begin{equation}\label{TM-10loc}
\mathcal{W}_{\rm loc}^{\rm s}
:=
\left \{
	x \in U
	:
	\lim_{t \rightarrow +\infty}
	S(t)x = 0
\right \}
\end{equation}
is a local invariant manifold of Eq. \eqref{nonlinear eq}, represented as
$\mathrm{graph}(\Psi) \cap U$, where $\Psi \colon \mathrm{Im} (P) \rightarrow \mathrm{Ker}(P)$ is a Lipschitz continuous mapping.
\end{theorem}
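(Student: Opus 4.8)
The plan is to repeat verbatim the truncation device from the proof of Theorem~\ref{TM-Thm-USlocal}, but now to feed the truncated data into the global \emph{stable} manifold theorem (Theorem~\ref{TM-Theorem2.3}) instead of the unstable one. Keeping the same cut-off $\phi_0(t)$ of $\phi(t)x=S(t)x-T(t)x$, the estimate \eqref{M-phixphiyloc}, namely $\|\phi_0(x)-\phi_0(y)\|\le e^{3\omega}\|x-y\|\sup_{\|z\|\le e^{\omega}r_0}d_Y(\partial A(z),\partial A(0))$, together with Corollary~\ref{M-Cophixy3omega}, shows that $\mathrm{Lip}(\phi_0(t))\to 0$ as $r_0\downarrow 0$ by Assumption~\ref{M-assump123}\,(\ref{AsptionA33}). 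Hence, for $r_0$ small enough the modified dynamics obtained by replacing $\phi$ with $\phi_0$ is $\varepsilon$-close to the hyperbolic semigroup $(T(t))_{t\ge0}$ with $\varepsilon<\varepsilon_0$, so all the hypotheses of Theorem~\ref{TM-Theorem2.3} are in force.

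Applying Theorem~\ref{TM-Theorem2.3} to this truncated system then produces a Lipschitz map $\Psi\colon\mathrm{Im}(P)\to\mathrm{Ker}(P)$ whose graph is the global stable manifold of the truncated dynamics; moreover, because $\varepsilon$ is small while $(T(t))_{t\ge0}$ is dichotomous, every orbit lying on this graph decays exponentially, say with a rate $\gamma>0$ fixed by the dichotomy constants. I would then choose $U=B_{r_0/2}(0,\mathrm{Im}(P))\times B_{r_0/2}(0,\mathrm{Ker}(P))\subset B_{r_0}(0)$, exactly as in Theorem~\ref{TM-Thm-USlocal}, shrinking $r_0$ once more if necessary so that any stable orbit issued from $U$ never leaves the truncation-free ball $B_{r_0}(0)$. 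Inside that ball $\phi_0=\phi$, so the truncated dynamics and the genuine semigroup $(S(t))_{t\ge0}$ coincide along such orbits.

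It then remains to identify $\mathcal{W}^{\rm s}_{\rm loc}$ with $\mathrm{graph}(\Psi)\cap U$. For $x\in\mathrm{graph}(\Psi)\cap U$ the truncated orbit through $x$ stays in $B_{r_0}(0)$, hence equals $S(t)x$ and tends to $0$; this yields $\mathrm{graph}(\Psi)\cap U\subset\mathcal{W}^{\rm s}_{\rm loc}$ and, simultaneously, the local invariance $S(t)(\mathrm{graph}(\Psi))\cap U\subset\mathrm{graph}(\Psi)$ demanded by the definition of a local Lipschitz invariant manifold. For the reverse inclusion I would take $x\in U$ with $S(t)x\to0$, show that its forward $S$-orbit never leaves $B_{r_0}(0)$ so that it agrees with the truncated orbit and therefore also converges to $0$ for the truncated dynamics, and then invoke the uniqueness of the stable manifold provided by Theorem~\ref{TM-Theorem2.3} (inherited from Minh-Wu~\cite{minwu}) to conclude $x\in\mathrm{graph}(\Psi)$, whence $x\in\mathrm{graph}(\Psi)\cap U$.

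The hard part will be exactly this orbit control in the reverse inclusion: the hypothesis $S(t)x\to0$ guarantees only that the orbit is \emph{eventually} inside $B_{r_0}(0)$, so a priori the cut-off could be active on an initial time interval and the identification of the genuine orbit with the truncated one could break down there. I expect to close this gap by shrinking $U$ to a ball of radius $e^{-\omega}r_0/2$ and using the a priori growth bound $\|S(t)x\|\le e^{\omega t}\|x\|$ coming from the accretivity hypothesis to confine the orbit to $B_{r_0}(0)$ on $[0,1]$, then bootstrapping through the semigroup property and the exponential attraction toward $\mathrm{graph}(\Psi)$; alternatively, one characterizes $\mathrm{graph}(\Psi)$ intrinsically as the set of points with bounded forward truncated orbit and transfers that characterization to the neighborhood $U$.
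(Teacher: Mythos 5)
Your proposal takes essentially the same route as the paper: the paper's entire proof of this theorem is the single sentence ``The proof is similar to that of Theorem~\ref{TM-Thm-USlocal} and so the details are omitted,'' i.e., precisely the argument you spell out --- truncate $\phi(t)$, verify via Corollary~\ref{M-Cophixy3omega} and Assumption~\ref{M-assump123} that the truncated dynamics is $\varepsilon$-close to $(T(t))_{t\ge 0}$, apply the global result (here Theorem~\ref{TM-Theorem2.3}), and restrict to $U = B_{r_0/2}(0,\mathbb{X}^1)\times B_{r_0/2}(0,\mathbb{X}^2)$. The orbit-control issue you flag in the reverse inclusion (that $\lim_{t\to+\infty}S(t)x=0$ for $x\in U$ only forces the orbit \emph{eventually} into $B_{r_0}(0)$, so the genuine and truncated orbits need not coincide on an initial interval) is a genuine subtlety, but the paper leaves it entirely unaddressed, so your treatment is if anything more careful than the published one.
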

\begin{proof}
The proof is similar to that of Theorem~\ref{TM-Thm-USlocal} and so the details are omitted.
\end{proof}

\section{Applications and Examples}\label{Sect-AE}

\begin{example}[Semilinear equation]\label{exa 5.1}
Let $\mathbb{X}$ be a Banach space.
Consider the semilinear equation
\begin{equation}\label{TM-11}
\frac{du(t)}{dt} = (L + F)u(t),
\end{equation}
where $L \colon \mathbb{X} \rightarrow \mathbb{X}$ is the infinitesimal generator of a $C_0$-semigroup $(T(t))_{t\ge0}$ satisfying $\|T(t)\| \le  Me^{\omega t}$, where $\omega$ is a certain positive number, $F \colon \mathbb{X} \rightarrow \mathbb{X}$ is a nonlinear Fr\'{e}chet differentiable operator, and $\bar u = 0$ is the stationary solution of Eq. \eqref{TM-11}. Assume further that $F'$ is continuous in a neighborhood of $0$ and $F'(0)=0$.

Then, by Proposition \ref{pro-AFrechet}, the operator $A:=L+F$ is proto-differentable in a neighborhood of $0$. Moreover, $\partial A(x)=L+F'(x)$ and then, by Lemma~\ref{lem 1}
\begin{align}
d_Y(\partial A(x),\partial A(0))&=d_Y(L+F'(x),L) \notag \\
&\le \| F'(x)\| .
\end{align}
By a standard renorming  $|x|=\sup_{t\ge 0}\left \|e^{-\omega t}T(t)x\right \|$ we can reduce the problem to the case where $L$ generates a contraction semigroup. Next, we can use the standard truncation procedure by defining
\begin{equation}
F_0(x):=
\begin{cases}
F(x), & \mbox{ if }  \| x\| \le r_0,\\
F(r_0x/\|x\|), & \mbox{ if } \| x\| >r_0,
\end{cases}
\end{equation}
then, the function $F_0$ is globally Lipschitz. It is well known that in this case $-(L+F_0)$ is m-accretive (see e.g. \cite{ohatak, web76}). This process makes $A:=L+F_0$ satisfies all assumptions of Assumption \ref{M-assump124}.
\end{example}

\begin{example}[Semilinear equation]\label{exa 5.2}
Consider Eq. \eqref{TM-11} again with a different assumption that $L$ be the generator of an analytic $C_0$-semigroup $(T(t))_{t \ge 0}$ in $\mathbb{X}$ such that $\|T(t)\| \le e^{\omega t}$, $t\ge 0,$ and $\gn{x }:= \|x \|+\|Lx\|$ for all $x\in D(A)$, here we note that this norm $\gn{\cdot }$ makes $(D(A),\gn{\cdot })$ a Banach space. Assume further that $F:(D(A),\gn{\cdot})\to \mathbb{X}$ be a Fr\'{e}chet differentiable operator such that $F(0)=0$, $F'(0)=0$ and
$F'(\cdot)$
is countinous in a neighborhood of $0$. Then, by Proposition~\ref{M-pro gateau}, $F$ is proto-differentiable as a function from $D(A)\subset \mathbb{X}$ to $\mathbb{X}$ and $\partial F(x)= F'(x)$. Therefore, 
$$\partial (L+F)(x)=[L+F(x)]'= L+F'(x)$$ as $L$ is its Fr\'{e}chet derivative, itself.

Next, by Lemma \ref{lem 1},
\begin{align}
d_Y(\partial (L+F)(x),L) 
& = d_Y(L+F'(x),L)\notag\\
& \le K \gn{F'(x)},
\end{align}
where $K$ is a constant depending only on $L$ and for $U\in {\mathcal L}(V,\mathbb{X})$, $\gn{U}$ denotes the norm  of $U$. If we denote $A:=L+F$ in this case, then
\begin{align}
d_Y(\partial A(x),\partial A(0))&=d_Y(L+F'(x),L) \notag\\
&\le K\gn{ F'(x)},
\end{align}
where $K$ is a constant depending only on $L$. Therefore, if $F'(x)$ is continuous in $x$ around $0$, then, Assumption \ref{M-assump123} is made and Theorems \ref{TM-Thm-USlocal} and \ref{TM-Thm-SMlocal} apply.
\end{example}

\begin{example}\label{exa 5.3}
As a concrete example from PDE of Example \ref{exa 5.2} we can take the following: Let us consider the initial value problem
\begin{equation}
\left\{
\begin{aligned}
\frac{\partial u(t,x)}{\partial t} 
& = \frac{\partial^2u(t,x)}{\partial x^2}-au(t,x)+\sin \left( \frac{\partial^2 u(t,x)}{\partial x^2} \right),
&& t \ge 0,
&& x \in [0, \pi],
\\
u(t, 0) &= u(t, \pi) = 0,
&& t \ge 0,
\\
u(0,x) &= u_0 (x),
&&
&& x \in [0, \pi],
\end{aligned}
\right.
\end{equation}
where $a$ is a constant and $u_0(\cdot )\in L^2[0,\pi]$. If we set $\mathbb{X}:=L^2[0,\pi]$, then (see Pazy~\cite[Chapter 7]{paz}) or Lunardi~\cite{lun}), the operator 
$Ay:= y''-ay$ with domain $D(A)$ consisting of all $y\in \mathbb{X}$ such that $y'$ is absolutely continuous such that $y''\in \mathbb{X}$, $y(0)=y(\pi)$, will generate an analytic $C_0$-semigroup $(T(t))_{t\ge 0}$. Therefore, $(T(t))_{t\ge 0}$ has an exponential dichotomy if and only if the following equations
\begin{equation}
\lambda +a=-n^2, \quad n=1,2,\ldots 
\end{equation}
have no zero root (see Travis-Webb~\cite[p. 414]{traweb}). Since $F(y(\cdot )):= \sin (y(\cdot ))$ is continuously differentiable in $y(\cdot )\in D(A)$ all our assumptions in Theorems \ref{TM-Thm-USlocal} and \ref{TM-Thm-SMlocal} are made if $a\not =-n^2$ for any positive integer $n$.
\end{example}
\begin{example}[Age-dependent population dynamics]\label{exa age}
Let  $L^1:=L^1(0,\infty; \mathbb{R}^n)$ be a Bochner integrable function space, which norm is denoted by $\|\cdot\|_{L^1}$. Given two mappings $F \colon L^1 \rightarrow \mathbb{R}^n$ and $G \colon L^1 \rightarrow L^1$, we consider the following partial differential equation
\begin{equation}\label{Kato-P}
\left\{
\begin{aligned}
\frac{\partial u(t,a)}{\partial t} 
+
\frac{\partial u(t,a)}{\partial a} 
& =
G(u(t, \cdot))(a),
&& t \ge 0,
&& a \ge 0,
\\
u(t, 0) &= F(u(t, \cdot))
&& t \ge 0.
\end{aligned}
\right.
\end{equation}
For $i=1,\ldots,n$, define $K_i,\, J_i \colon L^1 \rightarrow [0, \infty)$ by 
$$
K_i \phi = \int_{0}^{\infty} k_i(a) \phi(a)da,
\quad
J_i \phi = \int_{0}^{\infty} j_i(a) \phi(a)da,
$$
respectively, where
$k_i,\, j_i \colon [0,\infty) \rightarrow \mathcal{L}(\mathbb{R}^n, [0,\infty))$
are given mappings. Then define $F \colon L^1 \rightarrow \mathbb{R}^n$ and $G \colon L^1 \rightarrow L^1$ by taking their $i$-th component as follows:
\begin{align*}
& F(\phi)_i = \int_{0}^{\infty} \beta_i (a, K_i(\phi)\phi_i(a)da
\quad \mbox{ for } \phi=(\phi_i)\in L^1,
\\
& G(\phi)_i (a)= -\mu(a,J_i\phi)\phi_i(a)da
\quad \mbox{ a.e. }
a>0
\quad \mbox{ for }
\phi=(\phi_i)\in L^1,
\end{align*}
where $\beta_1,\, \mu_i \colon [0,\infty) \times [0,\infty) \rightarrow [0,\infty)$ are given functions (see Webb \cite{web85} for details).

In the following, we assume that $F \colon L^1 \rightarrow \mathbb{R}^n$ and $G \colon L^1 \rightarrow L^1$ are continuously Fr\'{e}chet differentiable, i.e.,
\begin{itemize}
\item[\rm (F)] For any $\phi \in L^1$, there exists a $F'(\phi) \in \mathcal{L}(L^1, \mathbb{R}^n)$ such that
$$
F(\phi + h) = F(\phi) + F'(\phi)h + o_F(h), \quad h \in L^1,
$$
where $o_F \colon L^1 \rightarrow \mathbb{X}$, $\|o_F(h)\| \le b_F(r)\|h\|_{L^1}$ for $\|h\|_{L^1} \le r$, and $b_F \colon [0,\infty) \rightarrow [0,\infty)$ is a continuous increasing function satisfying $b_F(0)=0$; and there exists a continuous increasing function $d_F \colon [0,\infty) \rightarrow [0,\infty)$ such that
$$
\|F'(\phi) - F'(\psi)\|_{\mathcal{L}(L^1, \mathbb{X})} \le d_F(r)\|\phi - \psi\|_{L^1},
$$
for $\|\phi\|_{L^1} \le r,\, \|\psi\|_{L^1} \le r$.

\item[\rm (G)] For any $\phi \in L^1$, there exists a $G'(\phi) \in \mathcal{L}(L^1, L^1)$ such that
$$
G(\phi + h) = G(\phi) + G'(\phi)h + o_G(h), \quad h \in L^1,
$$
where $o_G \colon L^1 \rightarrow \mathbb{X}$, 
$\|o_G(h)\|_{L^1} \le b_G(r)\|h\|_{L^1}$
for $\|h\|_{L^1} \le r$, and $b_G: [0, \infty) \to [0, \infty)$ is a continuous increasing function satisfying $b_G(0)=0$; and there exists a continuous increasing function $d_G \colon [0,\infty) \rightarrow [0,\infty)$ such that
$$
\|G'(\phi) - G'(\psi)\|_{\mathcal{L}(L^1, L^1)} \le d_G(r)\|\phi - \psi\|_{L^1},
$$
for $\|\phi\|_{L^1} \le r,\, \|\psi\|_{L^1} \le r$.
\end{itemize}

Let $\bar{u}$ a be a stationary solution of \eqref{Kato-P}, i.e., $\bar{u} \in W^{1,1} = W^{1,1}(0, \infty; \mathbb{R}^n)$, $\bar{u} = F(\bar{u})$, and $\bar{u}' = G(\bar{u})$ where ``$'$'' stands for $d/da$ when the variable of functions in $W^{1,1}$ is represented by $a$. Fix $r_0 > 0$ such that $\|\bar{u}\|_{L^1} < r_0$. Then define the radial truncations $F_0$ and $G_0$ by
\begin{equation}
F_0(\phi) = 
\begin{cases}
F(\phi) & \mbox{ if } \|\phi\|_{L^1} \le r_0, \\
F(r_0\phi/\|\phi\|_{L^1}) & \mbox{ if } \|\phi\|_{L^1} > r_0,
\end{cases}
\end{equation}
and
\begin{equation}
G_0(\phi) = 
\begin{cases}
G(\phi) & \mbox{ if } \|\phi\|_{L^1} \le r_0, \\
G(r_0\phi/\|\phi\|_{L^1}) & \mbox{ if } \|\phi\|_{L^1} > r_0.
\end{cases}
\end{equation}
Then, the functions $F_0$ and $G_0$ are globally Lipschitz continuous and continuously Fr\'{e}chet differentiable on the ball $B_{r_0}$ in $L^1$. 

Now define an operator $A$ on $L^1$ by
\begin{equation}\label{def A}
A\phi = \phi' - G_0(\phi), \quad \mbox{ for } \phi \in D(A):=\left \{\phi\in W^{1,1} : \phi(0)=F_0(\phi)\right \}.
\end{equation}
Obviously, operator $A$ is not included in Examples \ref{exa 5.1} and \ref{exa 5.2} even though the formula defining $A$ in \eqref{def A} would suggests it is. This is due to the fact that the domain $D(A)$ is not the whole space $W^{1,1}$ (see \cite{hal, web85} for more information on the matter).

The properties of operator $A$ are summarized in the following proposition:
\begin{proposition}\label{kat-prs}
\
\begin{enumerate}
\item With $\omega = \|F_0\|_{\mathrm{Lip}} + \|G_0\|_{\mathrm{Lip}}$, $A+\omega I$ is a densely defined m-accretive operator in $L^1$.

\item With $\omega_u = \|F'(u)\|_{\mathcal{L}(L^1, \mathbb{X})} + \|G'(u)\|_{\mathcal{L}(L^1, L^1)}$, operator $\partial A(u) + \omega_u I$ is m-accretive in $L^1$.

\item For $u \in D(A) \cap B_r(\bar{u})$, $\partial A(u)$ exists and
\begin{equation}
\mathrm{graph}(\partial A(u)) = \lim_{t \downarrow 0} t^{-1}[\mathrm{graph}(A)-(u, Au)].
\end{equation}

\item Operator $\partial A(u) + \omega I$ is m-accretive in $L^1$ for $u \in D(A) \cap B_r(\bar{u})$.

\item There exist $\lambda_{\bar{u}}$ and a  nondecreasing $L_{\bar{u}} \colon [0,\infty) \rightarrow [0,\infty)$ such that
\begin{equation}\label{kat-p56}
\left \|J_{\lambda}^{\partial A(z)}v - J_{\lambda}^{\partial A(u)}v\right \|_{L^1} \le \lambda \|z-u\|_{L^1} L_{\bar{u}}(\|v\|_{L^1})
\end{equation}
for $0 < \lambda < \lambda_{\bar{u}}$, $z,\,u \in B_{\delta_{\bar{u}}}(\bar{u}) \cap D(A)$ and $v \in L^1$.
\end{enumerate}
\end{proposition}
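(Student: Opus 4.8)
The plan is to prove the five claims in a logical rather than consecutive order: first the accretiveness and range condition of (1), then the explicit form of $\partial A(u)$ in (3), from which (2) and (4) follow by repeating the computation of (1), and finally the Lipschitz estimate (5). The recurring tool is the characterization of accretiveness in $L^1$ by the signed bracket: an operator $B$ is accretive exactly when $\int_0^\infty \mathrm{sgn}(\phi-\psi)\,(B\phi-B\psi)\,da \ge 0$ for all $\phi,\psi$ in its domain, so in each case it suffices to bound this integral below by a multiple of $-\|\phi-\psi\|_{L^1}$ that the shift $\omega I$ (resp. $\omega_u I$) absorbs.

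For (1) I would write $w=\phi-\psi$ with $\phi,\psi\in D(A)$. The transport term gives $\int_0^\infty \mathrm{sgn}(w)\,w'\,da = -|w(0)|$ (as $w(\infty)=0$ for $W^{1,1}$ functions), and the boundary condition turns this into $-|F_0(\phi)-F_0(\psi)|\ge -\|F_0\|_{\mathrm{Lip}}\|w\|_{L^1}$; the $G_0$ term is bounded below by $-\|G_0\|_{\mathrm{Lip}}\|w\|_{L^1}$; and the bracket of $\omega I$ contributes $+\omega\|w\|_{L^1}$, so the choice $\omega=\|F_0\|_{\mathrm{Lip}}+\|G_0\|_{\mathrm{Lip}}$ yields accretiveness. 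For the range condition I would solve $\phi+\lambda\phi'+\lambda\omega\phi=g+\lambda G_0(\phi)$ under $\phi(0)=F_0(\phi)$ by variation of parameters for the linear part (the integration constant pinned by the boundary condition) inside a contraction-mapping argument in $L^1$, valid for small $\lambda$ because $F_0,G_0$ are globally Lipschitz; density of $D(A)$ follows by approximating $g\in L^1$ by smooth functions and correcting the trace at $a=0$ on a short interval at $O(\delta)$ cost.

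For (3) I expect $\partial A(u)v = v' - G_0'(u)v$ on $D(\partial A(u))=\{v\in W^{1,1}: v(0)=F_0'(u)v\}$, proved by verifying the graph convergence directly. Writing $\phi=u+tw\in D(A)$, the constraint reads $w(0)=t^{-1}(F_0(u+tw)-F_0(u))\to F_0'(u)w$ by Fr\'{e}chet differentiability, while $t^{-1}(A\phi-Au)=w'-t^{-1}(G_0(u+tw)-G_0(u))\to w'-G_0'(u)w$; both the $\liminf$ and $\limsup$ of the rescaled graphs then collapse to this linear graph. The nontrivial direction is to produce, for each $v$ in the tangent space, an approximating family $w_t$ with $u+tw_t\in D(A)$, and this is where Fr\'{e}chet (not merely G\^{a}teaux) differentiability and the form of the boundary constraint are essential; I regard this as the main conceptual obstacle. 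With the form of $\partial A(u)$ in hand, (2) and (4) repeat the bracket computation verbatim with $F_0'(u),G_0'(u)$ in place of $F_0,G_0$, their operator norms giving $\omega_u=\|F'(u)\|+\|G'(u)\|$; since $\omega\ge\omega_u$ near $\bar u$, adding $(\omega-\omega_u)I$ preserves accretiveness and delivers (4) from (2), while the corresponding range condition is the same linear ODE.

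The most delicate point is (5). Putting $v_z=J_\lambda^{\partial A(z)}v$ and $v_u=J_\lambda^{\partial A(u)}v$ and subtracting their defining resolvent equations, I would split $\partial A(z)v_z-\partial A(u)v_u=\partial A(z)(v_z-v_u)+(\partial A(z)-\partial A(u))v_u$, where the second term is $-(G_0'(z)-G_0'(u))v_u$ in the differential equation and $(F_0'(z)-F_0'(u))v_u$ in the boundary trace. Combining the contraction property coming from the m-accretiveness of $\partial A(z)+\omega I$ with the Lipschitz bounds $\|G_0'(z)-G_0'(u)\|\le d_G(r)\|z-u\|_{L^1}$ and $\|F_0'(z)-F_0'(u)\|\le d_F(r)\|z-u\|_{L^1}$ from hypotheses (F) and (G), and using $\|v_u\|_{L^1}\le (1-\lambda\omega)^{-1}\|v\|_{L^1}$, I would bound $\|v_z-v_u\|_{L^1}$ by $\lambda\|z-u\|_{L^1}L_{\bar u}(\|v\|_{L^1})$; the explicit factor $\lambda$ is precisely the one multiplying the operator in the resolvent equation. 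The care here lies in the boundary term, since $v_z$ and $v_u$ satisfy different linear boundary conditions, so I expect tracking this trace contribution to be the principal technical hurdle.
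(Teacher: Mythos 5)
First, a point of comparison: the paper does not actually prove this proposition --- its proof is the single line ``See Kato [Propositions 5.2, 5.4, 5.5 and 5.6]'', the operator $A$ and hypotheses (F), (G) being imported verbatim from that reference. So your proposal does strictly more than the paper, and in substance it reconstructs Kato's own toolkit: the sign-bracket characterization of accretiveness in $L^1$, variation of parameters plus a contraction argument for the range condition, and subtraction of the two resolvent equations for (5). Your computations for (1), (2) and (4) are sound (for (4) one should say explicitly that $\omega_u\le\omega$ because $u$ lies in the ball where $F_0=F$ and $G_0=G$, so $\|F'(u)\|\le\|F_0\|_{\mathrm{Lip}}$, etc.), and your treatment of (5) is correct: the boundary-trace contribution you worried about is absorbed by exactly the same term $-|w(0)|$ that the transport operator produces in the bracket computation of (1), giving $\|v_z-v_u\|_{L^1}\le\lambda\, C_r\,\|z-u\|_{L^1}\|v\|_{L^1}$ for small $\lambda$, with $C_r$ built from $d_F(r)$, $d_G(r)$ and the resolvent bounds; hence the estimate holds with $L_{\bar u}$ linear, and this step is not really a hurdle at all.

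The one genuine gap is the step you flagged and left open: the ``liminf'' direction of the graph convergence in (3), i.e.\ producing, for each $v\in W^{1,1}$ with $v(0)=F_0'(u)v$, a family $w_t\to v$ \emph{in $W^{1,1}$} (not merely in $L^1$ --- the second graph coordinate contains $w_t'$) with $u+tw_t\in D(A)$. Here is how to close it. Fix a scalar cutoff $\eta\in W^{1,1}(0,\infty)$ with $\eta(0)=1$ and $\|\eta\|_{L^1}$ so small that $\theta:=\|F_0\|_{\mathrm{Lip}}\|\eta\|_{L^1}<1$, and seek $w_t=v+\eta c_t$ with $c_t\in\mathbb{R}^n$. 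The constraint $u+tw_t\in D(A)$ becomes the fixed-point equation $c_t=t^{-1}\bigl(F_0(u+tv+t\eta c_t)-F_0(u)\bigr)-v(0)$ in $\mathbb{R}^n$, whose right-hand side is a $\theta$-contraction in $c_t$; so $c_t$ exists uniquely for every $t$. Fr\'{e}chet differentiability of $F_0$ at $u$ together with the tangency condition $v(0)=F_0'(u)v$ yields $|c_t|\le\theta|c_t|+o(1)$, hence $c_t\to0$ and $w_t=v+\eta c_t\to v$ in $W^{1,1}$; convergence of the second graph coordinate then follows from the Lipschitz-plus-Fr\'{e}chet property of $G_0$, exactly as in your limsup direction. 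The reason the correction must be sought in the finite-dimensional set $v+\eta\cdot\mathbb{R}^n$, rather than by a contraction directly in $W^{1,1}$, is that any admissible cutoff satisfies $\|\eta'\|_{L^1}\ge|\eta(0)|=1$, so the boundary-correction map is never a contraction in the $W^{1,1}$ norm; it contracts only through its $L^1$ dependence, and that suffices precisely because its range is a fixed finite-dimensional affine set. Modulo this insertion (and the minor caveat that the componentwise sign bracket presumes the $\ell^1$ norm on $\mathbb{R}^n$), your outline is a correct proof.
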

\begin{proof}
See Kato~\cite[Propositions 5.2, 5.4, 5.5 and 5.6]{kat}.
\end{proof}
By \eqref{Yo-1lambdaJUV} and \eqref{kat-p56} we have
\begin{equation}\label{kat-p56eq}
d_Y (\partial A(z), \partial A(0) )=\frac{1}{\lambda} \left \|J_{\lambda}^{\partial A(z)}v - J_{\lambda}^{\partial A(u)}v\right \|_{L^1} \le \|z-u\|_{L^1} L_{\bar{u}}(\|v\|_{L^1}).
\end{equation}
This implies that
\begin{equation}
\lim_{z \rightarrow 0}
d_Y (\partial A(z), \partial A(0) = 0.
\end{equation}
This means that the condition \eqref{AsptionA33dY} is fulfilled. Therefore, all conditions listed in  Assumption~\ref{M-assump123} are satisfied. Applying Theorems \ref{TM-Thm-USlocal} and \ref{TM-Thm-SMlocal}, the age-dependent population model has a local stable, unstable manifold near $\bar u$ if the linear system $x'(t)=-\partial A(\bar u)x$ has an exponential dichotomy.
\end{example}

\bibliographystyle{amsplain}

\end{document}